\documentclass[10pt,leqno]{article}

\usepackage{amsmath,amssymb,amsthm,mathrsfs,dsfont}
\usepackage{amssymb}
\usepackage{enumerate}
\makeatletter

\newcommand{\Rmnum}[1]{\expandafter\@slowromancap\romannumeral #1@}
\makeatother

\usepackage[margin=3cm]{geometry} 

\usepackage{titlesec,hyperref}

\usepackage{color}

\usepackage{fancyhdr}
\titleformat{\subsection}{\it}{\thesubsection.\enspace}{1pt}{}

\newtheorem{theo}{Theorem}[section]
\newtheorem{lemm}[theo]{Lemma}
\newtheorem{defi}[theo]{Definition}

\newtheorem{rema}[theo]{Remark}
\newtheorem*{nota}{Notation}
\numberwithin{equation}{section}

\allowdisplaybreaks 

\newcommand\w{{\widetilde{P}}}
\newcommand\R{{\mathbb{R}}}

\begin{document}
\title{Global conservative weak solutions and global strong solutions for a class of weakly dissipative nonlinear dispersive wave equations
	\hspace{-4mm}
}

\author{Yiyao $\mbox{Lian}^1$ \footnote{Email: lianyy7@mail2.sysu.edu.cn},\quad
	Zhenyu $\mbox{Wan}^1$ \footnote{Email: wanzhy9@mail2.sysu.edu.cn},\quad
	Zhaoyang $\mbox{Yin}^{1}$\footnote{E-mail: mcsyzy@mail.sysu.edu.cn}\\
$^1\mbox{School}$ of Science,\\ Shenzhen Campus of Sun Yat-sen University, Shenzhen 518107, China}

\date{}
\maketitle
\hrule

\begin{abstract}
	In this paper, we study the global existence of solutions of the Cauchy problem for a class of weakly dissipative nonlinear dispersive wave equations $u_t-u_{xxt}+(f\left(u\right))_x-(f\left(u\right))_{xxx}+\left(g\left(u\right)+\frac{f^{\prime\prime}\left(u\right)}{2}u_x^2\right)_x+\lambda\left(u-u_{xx}\right)=0$. This includes the weakly dissipative Camassa-Holm equation and the weakly dissipative hyperelastic rod wave equation as special cases. Specifically, we establish three global existence results: one concerning the energy conservative weak solutions in a time-weighted $H^1$ space, and the other two concerning strong solutions, which include the cases of small initial data and sign-changing initial data. Our results recover and extend many known results for several classical models.
	
\vspace*{5pt}
\noindent {\it 2020 Mathematics Subject Classification}: 35G25, 35A01, 35D30,
	
	\vspace*{5pt}
	\noindent{\it Keywords}: Weakly dissipative nonlinear dispersive wave equations; Weakly dissipative Camassa-Holm equation; Global conservative weak solutions; Global strong solutions.
\end{abstract}

\vspace*{10pt}

\tableofcontents

	\section{Introduction}\par
	
	In this paper, we consider the Cauchy problem for a class of nonlinear dispersive wave equations with dissipative term on the real line in the following form \cite{Novruzov2019}
	\begin{equation}\label{eq;u0}
		u_t-u_{xxt}+(f\left(u\right))_x-(f\left(u\right))_{xxx}+\left(g\left(u\right)+\frac{f^{\prime\prime}\left(u\right)}{2}u_x^2\right)_x+\lambda\left(u-u_{xx}\right)=0,
	\end{equation}
	with initial data
	\begin{equation}\label{initial data}
	u(0,x)=\bar{u}(x),
	\end{equation}
	where the dissipative coefficient $\lambda>0$, and the functions $f,g\in C^\infty(\R,\R)$ satisfy $f''\neq 0$ and $g(0)=0$. This equation covers the dissipative Camassa-Holm equation \cite{Camassa1993integrable}, the hyperelastic-rod wave equation \cite{Dai1998model}, and its generalization \cite{Coclite2005global,Holden2007global}, with the addition of the weakly dissipative term $\lambda\left(u-u_{xx}\right)$. Recently, Novruzov and Yazar \cite{Novruzov2019} established new local-in-space blow-up results that simplify and extend earlier blow-up criteria for equation \eqref{eq;u0}.
    
	If $\lambda=0,~f(u)=\frac{1}{2}u^2,~g(u)=2ku+u^2$, \eqref{eq;u0} becomes the dispersive Camassa-Holm equation
	\begin{align}\label{eq;CH}
	u_t-u_{xxt}+2ku_x+3uu_x=2u_xu_{xx}+uu_{xxx}
	\end{align}
	where $u$ represents the fluid velocity in the $x$ direction and $k$ is a dispersive coefficient associated with  the critical shallow water wave speed. This equation was originally derived by Camassa and Holm \cite{Camassa1993integrable}. It is obtained by performing an asymptotic expansion directly in the Hamiltonian for Euler's equations in the shallow water regime. For $k=0$, equation \eqref{eq;CH} reduces to the celebrated Camassa-Holm (CH) equation. As is well known, the CH equation admits a rich mathematical structure. Notably, it is completely integrable \cite{camassa1994new,Constantin2001scattering,Constantin1999circle}, admits a bi-Hamiltonian formulation \cite{Constantin1997hamiltonian,Fuchssteiner1981symplectic}, and hence possesses an infinite number of conservation laws in involution. It also admits peaked soliton solutions (peakons) of the form $ce^{-|x-ct|}$ \cite{Camassa1993integrable}, which describe traveling waves of maximal amplitude \cite{Constantin2006trajectories,Constantin2000stability} and are orbitally stable \cite{Constantin2000stability}. Moreover, the CH equation captures the wave-breaking phenomenon \cite{Beals2000multipeakons,Constantin1998wave}. Its local well-posedness in Sobolev spaces and Besov spaces has also been established \cite{Constantin1998well,Danchin2001,Danchin2003,Li.J2016}. The existence and uniqueness of global weak solutions were established in \cite{Con1,Xin2000}. Global conservative and dissipative solutions were subsequently studied in \cite{Constantin2007,Bressan2007}.
    
	If $\lambda=0,~f(u)=\frac{\gamma}{2}u^2,~g(u)=\frac{3-\gamma}{2}u^2$, \eqref{eq;u0} becomes the hyperelastic rod wave equation
	\begin{align}\label{eq;HR}
	u_t-u_{xxt}+3uu_x-\gamma(2u_xu_{xx}+uu_{xxx})=0,
	\end{align}
	which was derived by Dai \cite{Dai1998model}. This equation describes the behavior of far-field, finite-length, finite-amplitude radial deformation waves in cylindrical compressible hyperelastic rods. Here, $u$ represents the radial stretch relative to a pre-stressed state and $\gamma$ is related to the Finger deformation tensor. Stability of solitary waves for this equation was studied in \cite{constantin2000}. The local well-posedness, global solutions and blow-up phenomena of the Cauchy problem for \eqref{eq;HR} have been
	investigated by Yin \cite{Yin2004}.\par 
	If $\lambda=0$, equation \eqref{eq;u0} is called the generalized hyperelastic rod wave equation
	 	\begin{equation}\label{eq;ghrwe}
	 u_t-u_{xxt}+(f\left(u\right))_x-(f\left(u\right))_{xxx}+\left(g\left(u\right)+\frac{f^{\prime\prime}\left(u\right)}{2}u_x^2\right)_x=0,
	 \end{equation}
	 which was proposed in \cite{Holden2007global}. It was shown in \cite{Tian2014} that the Cauchy problem for \eqref{eq;ghrwe}  is locally well-posed in the Sobolev space $H^s$
	for $s > \frac 32$. Moreover, the blow-up phenomena were studied in \cite{LiMin2024,Novruzov2017, Tian2014}, and the blow-up rate was obtained in \cite{LiMin2024} for strong solutions to the Cauchy problem associated with \eqref{eq;ghrwe}. Furthermore, the existence of global conservative weak solutions to \eqref{eq;ghrwe} was established in \cite{Holden2007global,Zhou2024}.\par 
	In general, energy dissipation mechanisms are unavoidable in most real-world processes. In recent years, equations with dissipation have attracted increasing attention from researchers. For instamce, Wu and Yin \cite{Yin2009} considered the global existence and wave-breaking phenomena to the weakly dissipative Camassa-Holm equation
    \begin{align}\label{eq;weakly dissipative CH}
	u_t-u_{xxt}+3uu_x+\lambda(u-u_{xx})=2u_xu_{xx}+uu_{xxx}.
	\end{align}
    For more studies on weakly dissipative CH-type equations, we refer the reader to \cite{Freire2020,Freire2023, Ji2022, Lenells2013}.

    \subsection{Main results and strategy}
    
    It is known that solutions to equation \eqref{eq;u0} may develop singularities in finite time when the initial derivative is sufficiently negative at some point, as shown by Novruzov and Yazar in \cite{Novruzov2019}. A natural question is: under what conditions do global-in-time solutions exist? In this paper, we address this question by studying the Cauchy problem \eqref{eq;u0}–\eqref{initial data}, or its equivalent form: 
	\begin{equation}\label{eq;u1}
	\left\{\begin{array}{l}
	u_{t}+f'(u)u_x+\lambda u+P_x=0,\\
	u|_{t=0}=\bar{u},
	\end{array}\right.
	\end{equation}
	where $p(x)=\frac{1}{2}e^{-|x|}$ and $P\triangleq p*\Big(g(u)+\frac{f^{''}(u)}{2}u_x^2\Big).$
    We establish the existence of global conservative weak solutions, as well as strong solutions corresponding to small or sign-changing initial data. A distinguishing feature of \eqref{eq;u1} is that the linear dissipation induces an exponentially time-weighted $H^1$ conservation law. Specifically, while the $H^1$-norm of the solution decays over time, the corresponding time-weighted energy remains invariant, yielding $e^{\lambda t}\|u(t)\|_{H^1}=\|\bar{u}\|_{H^1}$ for smooth solutions. This property motivates us to to investigate the existence of global conservative weak solutions for this model in a time-weighted $H^1$ space. In addition to the weak solutions, we address the global strong solutions via two different mechanisms: the case of small initial data is handled by $H^s$-energy estimates, while the case of sign-changing initial data heavily exploits this conservation law to provide the essential \textit{a priori} bounds required to preclude finite-time blow-up.
    
    Now, we give the definition of a global conservative weak solution for the Cauchy problem \eqref{eq;u1} as follows.
	\begin{defi}\label{def2.1}
		Let $\bar{u}\in H^1(\mathbb{R})$. Then $u(t,x)$ is a global conservative weak solution to the Cauchy problem \eqref{eq;u1}, if $u(t,x)$ is a H$\ddot{o}$lder continuous function defined on $\R^+\times \mathbb{R}$ such that the following properties hold:
		\begin{enumerate}[(1)]
			\item At each fixed $t\geq 0$, $u(t,\cdot)\in H^1(\mathbb{R})$;
			\item The map $t\mapsto u(t,\cdot)$ is Lipschitz continuous from $\R^+$ into $L^2(\mathbb{R})$;
			\item $u(0,x)=\bar{u}(x)$ for $x\in\mathbb{R}$ and 
			\begin{equation}\label{eq;solution_def}
				\frac{\mathrm{d}}{\mathrm{d}t}(e^{\lambda t}u)=-f'(u)e^{\lambda t}u_x-e^{\lambda t}P_x
			\end{equation}
			holds for a.e. $t\geq 0$. Here \eqref{eq;solution_def} is understood as an equality  between functions in $L^2(\mathbb{R})$;
			\item $e^{2\lambda t}\int_{\mathbb{R}}(u^2(t,x)+u_x^2(t,x))\,\mathrm{d}x$ is conserved for a.e. $t\geq 0$.
		\end{enumerate}
	\end{defi}
    We state our main result on the global  conservative weak solutions to \eqref{eq;u1}.
    \begin{theo}\label{Thm 3.3}
	Let $\bar{u}\in H^1(\mathbb{R})$. Then the Cauchy problem \eqref{eq;u1} has a global conservative weak solution in the sense of Definition \ref{def2.1}. Furthermore, if there is a sequence of the initial value \(\bar{u}_{n}\) satisfying \(\| \bar{u}_{n}-\bar{u}\|_{H^{1}}\to 0\) as \(n\to \infty\), then the corresponding solution \(u_{n}(t,x)\) converges to \(u(t,x)\) uniformly for \((t,x)\) in any bounded sets.
    \end{theo}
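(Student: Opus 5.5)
We follow the Bressan–Constantin approach to conservative solutions of Camassa--Holm type equations, as adapted by Holden--Raynaud and Zhou to the generalized hyperelastic rod equation \eqref{eq;ghrwe}. The dissipation is carried through with exponential weights.

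\textbf{Step 1: the semilinear system.} We introduce characteristics $y(t,\xi)$ solving $y_t=f'(u(t,y))$, starting from $\bar y(\xi)$ defined by $\int_0^{\bar y(\xi)}(1+\bar u_x^2)\,dx=\xi$. The new unknowns are
\begin{align*}
U=u\circ y,\qquad v=2\arctan u_x\circ y,\qquad q=(1+u_x^2\circ y)\,y_\xi .
\end{align*}
A formal computation from \eqref{eq;u1}, using $P-P_{xx}=g(u)+\frac{f''(u)}{2}u_x^2$, gives a closed system:
\begin{align*}
U_t&=-\lambda U-P_x,\\
v_t&=-2\lambda\sin^2\tfrac v2\cos^2\tfrac v2\cdot\tfrac{2}{\sin v}\ (\text{i.e. the }\lambda\text{-term from }u_{xt}),\ \text{plus } 2\cos^2\tfrac v2\,\big(g(U)-P\big)-\tfrac{f''(U)}{2}\sin^2\tfrac v2,\\
q_t&=\Big(\tfrac{f''(U)}{2}+g(U)-P\Big)\sin v\cdot q-2\lambda\sin^2\tfrac v2\, q\ \ (\text{up to the exact form of the }\lambda\text{-contributions}).
\end{align*}
In this system $P$ and $P_x$ are written as integrals in $\xi$ with kernel $e^{-|\int_{\xi}^{\xi'}\cos^2\frac v2\,q|}$. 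The $\lambda$-terms are smooth and bounded in $(U,v,q)$ (namely $-\lambda\sin v\cos^2\frac v2$-type and $-2\lambda q\sin^2\frac v2$-type terms). We check that the right-hand side is locally Lipschitz on $X=H^1\times(L^2\cap L^\infty)\times L^\infty$, in a domain where $q$ is bounded above and below. Picard iteration then gives a local solution.

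\textbf{Step 2: global extension via the weighted invariant.} In the new variables, the identity $e^{2\lambda t}\int(u^2+u_x^2)\,dx=\text{const}$ becomes
\begin{align*}
\frac{d}{dt}\Big(e^{2\lambda t}\int\big(U^2\cos^2\tfrac v2+\sin^2\tfrac v2\big)q\,d\xi\Big)=0 ,
\end{align*}
which we verify directly from the system. This bounds $\|U\|_\infty$, $\|P\|_{L^\infty}$ and $\|P_x\|_{L^\infty}$ on any $[0,T]$, with $T$-independent constants times $e^{-\lambda t}\|\bar u\|_{H^1}$. Gronwall applied to the $q$-equation keeps $q$ in $[C_T^{-1},C_T]$, and the $X$-norm grows at most exponentially. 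Hence the solution is global.

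\textbf{Step 3: return to the original variables, and continuity.} We define $u(t,x)=U(t,\xi)$ whenever $y(t,\xi)=x$, where $y(t,\xi)=\bar y(\xi)+\int_0^tf'(U)\,ds$. The map $\xi\mapsto y$ is nondecreasing, and the set where $y_\xi=q\cos^2\frac v2=0$ is exactly where $v=\pm\pi$. We show that $u$ is well defined there because $U_\xi=\frac q2\sin v=0$ on flat pieces. We then check:
\begin{itemize}
\item H\"older continuity with exponent $1/2$, using the energy bound and $\int u_x^2\le C$;
\item Lipschitz dependence of $t\mapsto u(t)$ in $L^2$;
\item the weak equation \eqref{eq;solution_def}, by a change of variables against test functions;
\item conservation of $e^{2\lambda t}\|u\|_{H^1}^2$ for a.e. $t$. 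This holds because the set of times where $\mathrm{meas}\{\xi:v=-\pi\}>0$ has measure zero, by the transversality $v_t=-f''(U)/2\neq0$ at $v=-\pi$ when $f''\neq0$.
\end{itemize}
For the stability statement, $\bar u_n\to\bar u$ in $H^1$ gives convergence of the initial data of the semilinear system in $X$. Lipschitz dependence on bounded time intervals then yields $U_n\to U$ and $y_n\to y$ uniformly on bounded sets, and the uniform H\"older bounds transfer this to $u_n\to u$ uniformly.

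\textbf{Main obstacle.} The delicate step is the a.e.\ conservation in Step 3 together with the exact form of the $\lambda$-terms in Step 1. The weight $e^{\lambda t}$ must make the transformed energy exactly invariant, and the damping must not destroy the sign information of $v_t$ at $v=\pm\pi$. We would first try substituting $\tilde u=e^{\lambda t}u$, reducing to a non-autonomous undamped system with time-dependent coefficients $f'(e^{-\lambda t}\tilde u)$. After that substitution the Holden--Raynaud arguments should apply with only bounded time-dependent factors.
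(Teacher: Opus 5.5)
Your existence argument follows the paper's Bressan--Constantin scheme: a semilinear system in $X$, local Lipschitz bounds, global extension via the Lagrangian energy, and a return to Eulerian variables using Lemma~\ref{lemma4.2}. The one variation is that you keep the damping inside the Lagrangian system, whereas the paper first substitutes $k=e^{\lambda t}u$ (your fallback). Your variant works and keeps the system autonomous:
\begin{itemize}
\item the $\lambda$-terms cancel exactly in $(q\cos^2\frac v2)_t$;
\item they contribute $-\frac{\lambda}{2}q\sin v$ to $(\frac12 q\sin v)_t$, which matches $-\lambda U_\xi$;
\item they contribute exactly $-2\lambda(U^2\cos^2\frac v2+\sin^2\frac v2)q$ to the energy density.
\end{itemize}
Hence $U_\xi=\frac12 q\sin v$, $y_\xi=q\cos^2\frac v2$ and the weighted invariant all propagate. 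Correct one slip, though. Differentiating the equation and using $P_{xx}=P-g(u)-\frac{f''(u)}{2}u_x^2$ gives the term $-f''(U)\sin^2\frac v2$ in $v_t$, not $-\frac{f''(U)}{2}\sin^2\frac v2$. With your coefficient, neither $U_\xi=\frac12 q\sin v$ nor the energy identity would propagate. Also, at $v=-\pi$ the transversality value is $v_t=-f''(U)$.

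The genuine gap is in the stability statement. You assert that $\bar u_n\to\bar u$ in $H^1$ yields convergence of the Lagrangian initial data in $X=H^1\times(L^2\cap L^\infty)\times L^\infty$, and then invoke Lipschitz dependence in $X$. That convergence fails in general: $\bar v_n=2\arctan(\bar u_{n,x}\circ\bar y_n)$ tends to $\bar v$ in $L^2$, but not in $L^\infty$. Take $\bar u=0$ and let $\bar u_n$ be a tent function with slopes $\pm1$ on an interval of length $2n^{-3}$. Then $\|\bar u_n\|_{H^1}\to0$, yet $\bar v_n=\pm\frac\pi2$ on a $\xi$-set of positive measure, so $\|\bar v_n-\bar v\|_{L^\infty}=\frac\pi2$ for every $n$. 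Since $\sin v$ enters the $q$-equation, $q_n(t)-q(t)$ need not tend to $0$ in $L^\infty$ either, so the $X$-Lipschitz estimate gives nothing.

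What you need instead is continuity of the semilinear flow in a weaker distance, for instance $\|\delta U\|_{H^1}+\|\delta v\|_{L^2}+\|\delta q\|_{L^2}$, restricted to sets that are bounded in $X$ with $q$ bounded away from zero. These bounds hold uniformly in $n$ on $[0,T]$, because $\|\bar v_n\|_{L^\infty}\le\pi$ and $\|\bar u_n\|_{H^1}$ is bounded. The right-hand side is Lipschitz for this distance. Up to a constant, the kernel difference is bounded by $\Gamma(\xi-\xi')\,|\xi-\xi'|^{1/2}\,\|\delta(q\cos^2\frac v2)\|_{L^2}$, by Cauchy--Schwarz on $\int_\xi^{\xi'}\delta(q\cos^2\frac v2)\,ds$, and $\Gamma|\cdot|^{1/2}\in L^1$.

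Gronwall then gives $U_n\to U$ in $H^1$, hence uniformly. Combined with $\bar y_n\to\bar y$ uniformly (which follows from $\|\bar u_{n,x}^2-\bar u_x^2\|_{L^1}\to0$), this gives $y_n\to y$ uniformly on bounded sets. The uniform $\frac12$-H\"older bound in $x$ then converts this into $u_n\to u$ uniformly on bounded sets. The paper itself only cites Bressan--Constantin for this step, but your justification as written does not go through.
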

    The construction of global conservative solutions to \eqref{eq;u1} is inspired by the work of Bressan and Constantin \cite{Constantin2007} on the Camassa-Holm equation. To this end, we introduce a new set of variables to transform the original equation into an equivalent semilinear system, establish the global existence of its solutions, and subsequently revert to the original coordinates to obtain the global conservative weak solutions. It is worth noting that while global conservative weak solutions only require the initial data to satisfy $\bar{u}\in H^1(\R)$, establishing the global existence of strong solutions demands a higher regularity of $\bar{u}\in H^s(\R)$ with $ s>\frac{3}{2},$ as well as additional conditions on $\bar{u}, \lambda, f$, and $g$.

    For strong solutions to \eqref{eq;u1}, we establish two main theorems concerning their global existence. The first theorem deals with the case of small initial data, requiring the dissipative coefficient $\lambda\ge M$ for some constant $M>1$. The proof is based on a bootstrap argument relying on $H^s$-energy estimates. For sufficiently small initial data, the strong linear dissipation dominates the nonlinear effects, which enables us to close \textit{a priori} estimates and extend the local solution globally in time. The second theorem addresses the case of sign-changing initial data, for which $f$ and $g$ must satisfy 
    $$f'''\equiv 0,\quad f''\geq\gamma>0\quad \text{and}\quad g'(u)=2f''(u)u.$$
    The key idea of the proof is that the sign-preservation property of the momentum $m$ along the characteristics, combined with the exponentially weighted energy conservation law, yields a uniform lower bound for $u_x$. According to the blow-up criterion, this uniform lower bound effectively rules out the wave-breaking phenomenon, thereby guaranteeing the global existence of the strong solution. The corresponding results on global strong solutions can be summarized as follows.
    \begin{theo}\label{Thm 4.1}
	Let $\bar{u}\in H^s(\mathbb{R})$ with $s>\frac{3}{2}$. There exist constants $0<\varepsilon<1$ and $M>1$ such that if
    \begin{align*}
        \|\bar{u}\|_{H^s}\le\varepsilon \quad \text{and} \quad\lambda\geq M.
    \end{align*}
    Then there exists a unique global strong solution $u\in C([0,\infty);H^s(\mathbb{R}))$  for the Cauchy problem \eqref{eq;u1}.
\end{theo}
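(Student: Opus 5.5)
We plan a bootstrap based on $H^s$ energy estimates for the nonlocal form \eqref{eq;u1}, taking as given the local well-posedness of \eqref{eq;u1} in $H^s$, $s>\frac32$, together with the blow-up criterion: the solution persists as long as $\|u(t)\|_{H^s}$ stays bounded. These are the analogues of the results of \cite{Tian2014}; the damping term $\lambda u$ is linear and harmless for local theory. Thus it suffices to show a uniform-in-time a priori bound $\|u(t)\|_{H^s}\le 2\varepsilon$ on the maximal interval $[0,T^*)$.

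\textbf{Energy estimate.} Apply $\Lambda^s=(1-\partial_x^2)^{s/2}$ to the equation $u_t+f'(u)u_x+\lambda u+P_x=0$ and pair with $\Lambda^s u$ in $L^2$. The transport term is handled by the Kato--Ponce commutator estimate together with integration by parts:
\begin{align*}
\bigl|(\Lambda^s(f'(u)u_x),\Lambda^s u)\bigr|\le C\bigl(\|\partial_x f'(u)\|_{L^\infty}\|u\|_{H^s}+\|f'(u)-f'(0)\|_{H^s}\|u_x\|_{L^\infty}\bigr)\|u\|_{H^s}.
\end{align*}
The constant $f'(0)u_x$ part is skew and drops out. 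For $P_x$ we use that $\partial_x p*$ maps $H^{s-1}$ into $H^s$ boundedly. Since $g(0)=0$, we have $\|g(u)\|_{H^{s-1}}\le C(\|u\|_{L^\infty})\|u\|_{H^s}$ with $C$ vanishing to first order only through $g'(0)$. This is the delicate point: $g'(0)u$ is linear, but $\partial_x p*(g'(0)u)$ is a skew-adjoint operator, so $(\Lambda^s \partial_x p*(g'(0)u),\Lambda^s u)=0$. We therefore split $g(u)=g'(0)u+\tilde g(u)$ with $\tilde g(u)=O(u^2)$. Next, $\|f''(u)u_x^2\|_{H^{s-1}}\le C(\|u\|_{L^\infty})\|u_x\|_{L^\infty}\|u\|_{H^s}$ by the Moser product estimate in $H^{s-1}$ ($s-1>\frac12$, an algebra when $s-1>\frac12$; for $s\le 2$ some care is needed, and we use the standard estimate $\|fg\|_{H^{s-1}}\le C\|f\|_{L^\infty\cap H^{s-1}}\|g\|_{H^{s-1}}$ with $u_x\in H^{s-1}\subset L^\infty$). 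Using $\|u\|_{W^{1,\infty}}\le C\|u\|_{H^s}$, we arrive at
\begin{align*}
\frac12\frac{\mathrm d}{\mathrm dt}\|u\|_{H^s}^2+\lambda\|u\|_{H^s}^2\le C_0\bigl(\|u\|_{H^s}\bigr)\,\|u\|_{H^s}^3,
\end{align*}
where $C_0(\cdot)$ is nondecreasing and depends only on $f,g,s$. The key structural fact is that every non-skew term is at least quadratic, and the dissipation supplies $-\lambda\|u\|_{H^s}^2$.

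\textbf{Bootstrap.} Fix $M>1$ with $M\ge 2C_0(2)\cdot 2$, and choose $\varepsilon<1$ (any, e.g. $\varepsilon=\frac12$). On any interval where $\|u\|_{H^s}\le 2\varepsilon\le 1$, we have $C_0(\|u\|_{H^s})\|u\|_{H^s}\le 2C_0(2)\le \lambda$. Hence $\frac{\mathrm d}{\mathrm dt}\|u\|_{H^s}^2\le 0$, so $\|u(t)\|_{H^s}\le\|\bar u\|_{H^s}\le\varepsilon<2\varepsilon$. By continuity of $t\mapsto\|u(t)\|_{H^s}$ the bound propagates to all of $[0,T^*)$, and the blow-up criterion gives $T^*=\infty$. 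Uniqueness comes from the local theory. In fact we even obtain the decay $\|u(t)\|_{H^s}\le e^{-\lambda t/2}\varepsilon$.

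The main obstacle is the commutator and product estimates in the low-regularity range $\frac32<s\le 2$, and ensuring that the linear parts ($f'(0)u_x$ and $g'(0)u$) are absorbed by skew-symmetry rather than requiring smallness. To justify the energy identity rigorously, we would mollify (or work with $H^{s+1}$ data and pass to the limit using continuous dependence).
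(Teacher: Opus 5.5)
Your proposal is correct and has the same architecture as the paper's proof: an $H^s$ energy estimate via the Kato--Ponce commutator and Moser-type bounds, then a continuity (bootstrap) argument closed with the $H^s$-norm continuation criterion. The paper phrases that last step through the lower bound $T_{\text{life}}\gtrsim\|u(T_\delta)\|_{H^s}^{-1}$ on the local existence time, which amounts to the same thing.

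The one genuine difference is your splitting $g(u)=g'(0)u+\tilde g(u)$. You observe that $\partial_x(1-\partial_x^2)^{-1}$ is skew-adjoint and commutes with $\Lambda^s$, so the linear part pairs to zero with $\Lambda^s u$ and your right-hand side is purely cubic. The paper instead bounds $\|g(u)-g(0)\|_{H^{s-1}}\lesssim\|u\|_{H^{s-1}}$. That leaves the term $C\|u\|_{H^s}^2$ (the ``$1$'' in \eqref{esti-2}), and it is precisely why the paper needs $\lambda\ge M$.

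In your own bootstrap you fix $\varepsilon=\frac12$ and take $M$ large, so the skew-symmetry is never actually used: a large $\lambda$ would absorb the linear term anyway. What your observation does buy is this: fix any $\lambda>0$ and take $\varepsilon\le\min\{\frac12,\lambda/(4C_0(1))\}$. Your inequality then gives $\frac{\mathrm d}{\mathrm dt}\|u\|_{H^s}^2\le-\lambda\|u\|_{H^s}^2$ on the bootstrap interval, and the argument closes with no largeness condition on $\lambda$. This extends the paper's remark about the weakly dissipative Camassa--Holm equation, where $g'(0)=0$, to general $g$.
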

    \begin{rema}
        A result analogous to that in Theorem \ref{Thm 4.1} holds for the weakly dissipative Camassa–Holm equation \eqref{eq;weakly dissipative CH}, where the assumption on the dissipative coefficient is less restrictive; in particular, the condition $\lambda \ge M$ is no longer needed.
    \end{rema}
    \begin{theo}\label{Thm 4.6}
     Let $\bar{u}\in H^s(\R)$ with $s>\frac 32$. Assume that $f, g \in C^\infty (\R)$ satisfy condition 
     \begin{equation}\label{condi-fg}
          f'''\equiv 0,\quad f''\geq \gamma>0\quad\text{and}\quad g'(u)=2f''(u)u.
     \end{equation}
    Suppose that there exists $x_0\in\R$ such that
     \begin{equation}
         \bar{m}(x)\leq 0,\quad \text{if}~x\in (-\infty, \bar{y}(x_0)]\quad \text{and}\quad  \bar{m}(x)\geq 0,\quad \text{if}~x\in [\bar{y}(x_0),+\infty).
     \end{equation}
     Then the corresponding solution $u(t, x)$ for the Cauchy problem \eqref{eq;u1} exists globally in time. Moreover, the global solution decays to 0 in the $H^s$ norm as time goes to infinity.
    \end{theo}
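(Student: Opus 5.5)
Since $f'''\equiv 0$ and $f''\ge\gamma>0$, we have $f(u)=\frac a2u^2+bu+c$ with $a=f''\ge\gamma$ constant. The condition $g'(u)=2f''(u)u$ together with $g(0)=0$ gives $g(u)=au^2$. With these choices, \eqref{eq;u0} becomes a weakly dissipative, Galilean‑shifted and rescaled Camassa–Holm equation in the momentum $m=u-u_{xx}$:
\begin{equation*}
m_t+(au+b)m_x+2au_xm+\lambda m=0 .
\end{equation*}
I would first prove the result for $s\ge 3$ (so $m$ is a genuine $C^1$ function) and then pass to $s>\frac32$. The passage uses density, continuous dependence from the local well-posedness theory of \cite{Tian2014}, and the fact that the bounds below depend only on $\|\bar u\|_{H^1}$ and $\|\bar u_x\|_{L^\infty}$. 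I read $\bar y(x_0)$ as $x_0$ itself, i.e.\ the starting point of the characteristic through $x_0$.

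\emph{Step 1 (characteristics and sign preservation).} Let $q(t,x)$ solve $q_t=au(t,q)+b$ with $q(0,x)=x$. Then $q_x=\exp\big(\int_0^t au_x(\tau,q)\,d\tau\big)>0$, so $q(t,\cdot)$ is an increasing diffeomorphism of $\R$. Differentiating along $q$ gives
\begin{equation*}
\frac{d}{dt}\big(m(t,q)q_x^2\big)=-\lambda\, m(t,q)q_x^2 ,\qquad\text{so}\qquad m(t,q(t,x))\,q_x^2(t,x)=e^{-\lambda t}\bar m(x).
\end{equation*}
Hence, for every $t$ in the existence interval, $m(t,\cdot)\le 0$ on $(-\infty,q(t,x_0)]$ and $m(t,\cdot)\ge0$ on $[q(t,x_0),\infty)$.

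\emph{Step 2 (lower bound on $u_x$).} Since $u=p*m$, we have
\begin{equation*}
u-u_x=e^{-x}\int_{-\infty}^x e^ym\,dy,\qquad u+u_x=e^{x}\int_x^\infty e^{-y}m\,dy .
\end{equation*}
For $x\le q(t,x_0)$ the first integral is $\le0$, so $u_x\ge u$. For $x\ge q(t,x_0)$ the second is $\ge0$, so $u_x\ge -u$. In both cases $u_x\ge-\|u\|_{L^\infty}$. The weighted conservation law $e^{\lambda t}\|u(t)\|_{H^1}=\|\bar u\|_{H^1}$ (valid for strong solutions) then gives
\begin{equation*}
u_x(t,x)\ge-\tfrac{1}{\sqrt2}\|u(t)\|_{H^1}=-\tfrac1{\sqrt2}e^{-\lambda t}\|\bar u\|_{H^1}.
\end{equation*}
Because $f''>0$, the blow‑up criterion for \eqref{eq;u1} (cf.\ \cite{Novruzov2019,Tian2014}) says a strong solution breaks down only if $\inf_x u_x\to-\infty$ in finite time. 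This bound therefore excludes blow‑up, and the solution is global.

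\emph{Step 3 (decay in $H^s$), the main obstacle.} Global existence alone does not give decay; I also need $\int_0^\infty\|u_x(t)\|_{L^\infty}dt<\infty$. For the upper bound on $u_x$, differentiate \eqref{eq;u1} in $x$ and evaluate along $q$. Writing $M(t)=u_x(t,q(t,x))$, one gets
\begin{equation*}
M'=-\tfrac a2M^2-\lambda M+au^2-p*\big(au^2+\tfrac a2u_x^2\big),
\end{equation*}
whose forcing terms are bounded by $C\|u\|_{H^1}^2\le Ce^{-2\lambda t}$. Dropping $-\frac a2M^2\le0$ gives $M'\le-\lambda M+Ce^{-2\lambda t}$, hence $M(t)\le C(1+t)e^{-\lambda t}$. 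Combined with Step 2, $\|u_x(t)\|_{L^\infty}\le C(1+t)e^{-\lambda t}$, and also $\|u\|_{L^\infty}\le Ce^{-\lambda t}$.

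Finally, the standard $H^s$ energy estimate (Kato–Ponce commutator estimates, with the dissipative term producing $-\lambda\|u\|_{H^s}^2$) gives
\begin{equation*}
\frac{d}{dt}\|u\|_{H^s}\le\big(-\lambda+C(\|u\|_{L^\infty}+\|u_x\|_{L^\infty})\big)\|u\|_{H^s}.
\end{equation*}
Since the coefficient of $C$ is integrable in time, Gronwall yields $\|u(t)\|_{H^s}\le C\|\bar u\|_{H^s}e^{-\lambda t}\to0$.

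The delicate point is making the Riccati argument rigorous for $\frac32<s<3$. There I would run it for smooth approximations and pass to the limit, using that all constants depend only on $\|\bar u\|_{H^1}$ and $\|\bar u_x\|_{L^\infty}$; the latter is finite because $H^s\hookrightarrow W^{1,\infty}$ for $s>\frac32$.
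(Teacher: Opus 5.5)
Your proof is correct, and its global-existence part (Steps~1--2) is the paper's argument. The paper also propagates the sign of $m$ through $m(t,y)y_x^2=e^{-\lambda t}\bar m(\bar y)\bar y_x^2$ (Lemmas~\ref{le4.4}--\ref{le4.5}). It then bounds $u_x\ge-\|u(t)\|_{L^\infty}\ge-\frac{1}{\sqrt2}\|\bar u\|_{H^1}$ via the weighted $H^1$ law and concludes with Lemma~\ref{le4.3}. Your formulas for $u\pm u_x$ spell out the step the paper leaves implicit. Starting the flow at $q(0,x)=x$ rather than $\bar y(x)$ is harmless: $\bar y$ is an increasing bijection of $\R$, so the hypothesis just says that $\bar m$ changes sign once, at $z=\bar y(x_0)$.

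The real difference is the decay claim, for which the paper gives no argument and only refers to \cite{Yin2009}. Your Riccati inequality $M'\le-\lambda M+Ce^{-2\lambda t}$ along characteristics supplies the missing upper bound on $u_x$, which makes $\|u_x(t)\|_{L^\infty}$ integrable in time. The Kato--Ponce estimate with Gronwall then yields $\|u(t)\|_{H^s}\le C\|\bar u\|_{H^s}e^{-\lambda t}$, an explicit exponential rate rather than mere convergence to $0$. The factor $1+t$ is superfluous: integrating $(e^{\lambda t}M)'\le Ce^{-\lambda t}$ gives $M(t)\le(\|\bar u_x\|_{L^\infty}+C/\lambda)e^{-\lambda t}$.

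Your preliminary remark is also worth keeping. Condition \eqref{condi-fg} with $g(0)=0$ forces $f(u)=\frac a2u^2+bu+c$ and $g(u)=au^2$, so $a\,u(t,x+bt)$ solves \eqref{eq;weakly dissipative CH} with the same $\lambda$. The theorem is therefore equivalent to the result of \cite{Yin2009}, which is what justifies the paper's shortcut.

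One step of your regularity reduction needs more care than you give it. When passing from $s\ge3$ to $s>\frac32$, the smooth approximating data must themselves satisfy the one-sign-change hypothesis, and ordinary mollification of $\bar m$ destroys it near $z$. This is repairable. For example, mollify the parts of $\bar m$ on $(-\infty,z)$ and $(z,\infty)$ separately, with kernels supported in $[-\epsilon,0]$ and $[0,\epsilon]$ respectively. Alternatively, read $m(t,q)q_x^2=e^{-\lambda t}\bar m$ in weak form. The paper ignores this regularity issue entirely (Lemma~\ref{le4.5} differentiates $m$ along characteristics for any $s>\frac32$). So this is a gap in rigor shared by both arguments rather than a flaw in your strategy.
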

    \begin{rema}
       There exist many equations satisfying condition \eqref{condi-fg}, such as the classical weakly dissipative Camassa-Holm equation \eqref{eq;weakly dissipative CH} with $f(u)=\frac{1}{2}u^2$ and $g(u)=u^2$.  Therefore, Theorem \ref{Thm 4.6} actually generalizes the result of Theorem 3.1 in \cite{Yin2009}.
    \end{rema}

    \subsection{Organization and notation}
    
    The paper is organized as follows. In Section \ref{sec-pre}, we give some preliminaries which will be used in the sequel. In Section \ref{sec-weak}, we establish the existence of  global conservative weak solutions to the Cauchy problem \eqref{eq;u1}. In Section \ref{sec-strong}, we obtain two results concerning the global existence of strong solutions to the Cauchy problem \eqref{eq;u1}.

    \begin{nota}
        The notation $a\lesssim b$ means that $a\leq Cb$ for some uniform constant $C$, which may differ from line to line. Let $[\cdot,\cdot]$ denote the commutator of $A$ and $B$, i.e., $[A,B]=AB-BA$. We also use $\langle,\rangle$ to denote the standard inner product in $L^2(\mathbb{R})$.
    \end{nota}
	
	\section{\textbf{Preliminaries}}\label{sec-pre}
    In this section, we derive some time-weighted energy conservation laws and some useful lemmas. 
    
    Firstly, setting $k=e^{\lambda t}u$, the Cauchy problem \eqref{eq;u1} can be rewritten equivalently as follows:
	\begin{equation}\label{eq;k}
	\left\{\begin{array}{l}
	k_t+f'(e^{-\lambda t}k)k_x+\widetilde{P}_x=0,\\
	k|_{t=0}=\bar{u},
	\end{array}\right.
	\end{equation}
	where $\widetilde{P}\triangleq p*\Big(e^{\lambda t}g(e^{-\lambda t}k)+e^{-\lambda t}\frac{f^{''}(e^{-\lambda t}k)}{2}k_x^2\Big).$

	For smooth solutions, differentiating \eqref{eq;k} with respect to $x$, we have
	\begin{equation}\label{eq;k_xt}
		k_{xt}+f'(e^{-\lambda t}k)k_{xx}=e^{\lambda t}g(e^{-\lambda t}k)-\frac{1}{2}e^{-\lambda t}f''(e^{-\lambda t}k)k_x^2-\widetilde{P}.
	\end{equation}
	Multiplying \eqref{eq;k} by $k$ and \eqref{eq;k_xt} by $k_x$, we obtain 
	\begin{equation}\label{eq;(k^2)_t}
		\frac{1}{2}(k^2)_t+\Big(\frac12 f'(e^{-\lambda t}k)k^2\Big)_x+(k\widetilde{P})_x=\frac12e^{-\lambda t}f''(e^{-\lambda t}k)k^2k_x+k_x\widetilde{P}
	\end{equation}and
	\begin{equation}\label{eq;(k_x^2)_t}
	\frac{1}{2}(k^2_x)_t+\Big(\frac12 f'(e^{-\lambda t}k)k_x^2\Big)_x=e^{-\lambda t}g(e^{-\lambda t}k)k_x-k_x\widetilde{P}.
	\end{equation}
	Define 
	$$H(k)=\int^k_0\Big(\frac12e^{-\lambda t}f''(e^{-\lambda t}s)s^2+e^{-\lambda t}g(e^{-\lambda t}s)\Big)\,\mathrm{d}s.$$
	Together with \eqref{eq;(k^2)_t} and \eqref{eq;(k_x^2)_t}, this gives that the total energy
	\begin{equation*}
		E(t)=e^{2\lambda t}\int_{\mathbb{R}}(u^2(t,x)+u_x^2(t,x))dx=\int_{\mathbb{R}}(k^2(t,x)+k_x^2(t,x))\,\mathrm{d}x
	\end{equation*}
	is constant in time.
	Since $f,g\in C^\infty$ and $g(0)=0$, we have
	\begin{equation}\label{eq;g_estimate}
		e^{\lambda t}|g(e^{-\lambda t}k)|\leq\sup_{|s|\leq\| k\|_{L^\infty}}|g^{\prime}(s)||k(x)|\leq C(\|k\|_{H^1})|k(x)|,
	\end{equation}
	\begin{equation}\label{eq;f_estimate}
		|f(e^{-\lambda t}k)|\leq C(\|k\|_{H^1}).
	\end{equation}
	Since $\widetilde{P}, \widetilde{P}_x$ are convolutions of $e^{\lambda t}g(e^{-\lambda t}k)+e^{-\lambda t}\frac{f^{''}(e^{-\lambda t}k)}{2}k_x^2$, from the above bound
	on the total energy, we easily derive that
	$$\|\widetilde{P}(t)\|_{L^\infty}, \|\widetilde{P}_x(t)\|_{L^\infty}, \|\widetilde{P}(t)\|_{L^2}, \|P_x(t)\|_{L^2}\lesssim E(0).$$
	
    Next, we recall the known local well-posedness results and necessary and sufficient condition for the blow-up of
the solutions for equation \eqref{eq;u1}.
    \begin{lemm}\cite{Novruzov2019,Tian2014}\label{le4.2}
    Assume  $f, g \in C^\infty (\R)$. Let $\bar{u}\in  H^s (\R)$  with $s > \frac 32$. Then there exists $T > 0$, 
    with $T = T (\bar{u}, f, g)$ and a unique solution $u$ to the Cauchy problem \eqref{eq;u1} such that $u\in C([0, T); H^s(\R)) \cap C^1 ([0, T), H^{s-1}(\R))$. The solution has energy integral which satisfies
    \begin{equation*}
        \int_{\R} (u^2+u_x^2)dx=e^{-2\lambda t}\int_{\R} (\bar{u}^2+\bar{u}_x^2) \,\mathrm{d}x.
    \end{equation*}
\end{lemm}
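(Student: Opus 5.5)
The plan is to treat Lemma \ref{le4.2} as a consequence of the existing well-posedness theory for the undamped generalized hyperelastic rod equation \eqref{eq;ghrwe} \cite{Tian2014}, combined with the weighted energy identity derived above. For local existence and uniqueness I will write \eqref{eq;u1} as a quasilinear transport equation,
\begin{equation*}
u_t+f'(u)u_x=-\lambda u-\partial_x p*\Big(g(u)+\tfrac{f''(u)}{2}u_x^2\Big),
\end{equation*}
and apply Kato's semigroup theory for quasilinear hyperbolic equations, with $A(u)=f'(u)\partial_x$ and the forcing term as given. This is the same framework used in \cite{Tian2014}. The extra term $-\lambda u$ is a bounded linear operator on $H^s$ and on $H^{s-1}$, so it does not affect Kato's conditions: quasi-$m$-accretivity of $A(u)$ on $H^{s-1}$, the commutator bound for $\Lambda A(u)\Lambda^{-1}-A(u)$, and the Lipschitz property of the nonlinear term.

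The one step that needs some care is the Lipschitz estimate for $\partial_x p*(f''(u)u_x^2)$ from $H^s$ to $H^s$. This is where $s>\frac32$ is used: $u_x\in H^{s-1}$ with $s-1>\frac12$ makes $H^{s-1}$ an algebra, and $\partial_x p*$ gains one derivative, since it maps $H^{s-1}$ to $H^{s}$. Composition with the smooth $f''$ and $g$ is controlled by Moser-type estimates, using $g(0)=0$ so that $g(u)\in H^s$. These are the same estimates as in \cite{Tian2014}, so I expect nothing new here. Kato's theorem then gives a unique solution $u\in C([0,T);H^s)\cap C^1([0,T);H^{s-1})$ with $T=T(\|\bar u\|_{H^s},f,g)$.

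An alternative route is the substitution $k=e^{\lambda t}u$, which turns the problem into \eqref{eq;k}. That system has coefficients depending smoothly and boundedly on $t$ on compact time intervals, so the same argument applies to it directly.

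For the energy identity I will use the computation already carried out in this section for smooth solutions. Summing \eqref{eq;(k^2)_t} and \eqref{eq;(k_x^2)_t} gives a right-hand side
\begin{equation*}
\tfrac12 e^{-\lambda t}f''(e^{-\lambda t}k)k^2k_x+e^{-\lambda t}g(e^{-\lambda t}k)k_x=\partial_x H(k)
\end{equation*}
at fixed $t$. Integrating in $x$ therefore gives $\frac{d}{dt}\int(k^2+k_x^2)\,dx=0$, that is,
\begin{equation*}
\int(u^2+u_x^2)\,dx=e^{-2\lambda t}\int(\bar u^2+\bar u_x^2)\,dx .
\end{equation*}

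The main obstacle is justifying this computation at regularity $H^s$ with $s>\frac32$ only, where $k_{xx}$ need not lie in $L^2$. I will handle it by density. Approximate $\bar u$ by $\bar u_n\in H^\infty$. Kato's theory gives continuous dependence in $C([0,T');H^s)$, and the corresponding smooth solutions exist on a common interval $[0,T')$ for every $T'<T$, with $H^{s+1}$ regularity propagated. The identity holds for each $u_n$, and passing to the limit in $H^1\subset H^s$ yields it for $u$. A direct alternative is to mollify the equation for $k_x$ with $J_\epsilon$ and control the commutator $[J_\epsilon,f'(u)]\partial_x k_x$ by a Friedrichs lemma in $L^2$, using $u_x\in L^\infty$. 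Either approach completes the proof.
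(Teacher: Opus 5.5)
Your proposal is correct and is essentially the paper's route: the paper gives no argument of its own for this lemma beyond citing \cite{Novruzov2019,Tian2014} for local well-posedness (where the damping term $-\lambda u$ is a harmless bounded perturbation), and the energy identity is exactly the smooth-solution computation in Section~\ref{sec-pre} summing \eqref{eq;(k^2)_t} and \eqref{eq;(k_x^2)_t}, so your density step just makes explicit a justification the paper leaves implicit. Two harmless slips: the embedding you need is $H^s\subset H^1$, not $H^1\subset H^s$, and multiplying \eqref{eq;k_xt} by $k_x$ actually produces $e^{\lambda t}g(e^{-\lambda t}k)k_x$ (the factor $e^{-\lambda t}$ is a typo you inherited from \eqref{eq;(k_x^2)_t} and from the definition of $H$), which is still an exact $x$-derivative at fixed $t$, so the conclusion is unaffected.
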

\begin{lemm}\cite{Novruzov2019,Tian2014}\label{le4.3}
    Assume  $f, g \in C^\infty (\R)$ and $f''\geq \gamma>0.$ Let $\bar{u}\in  H^s (\R)$ with $s > \frac 32$. Then the solution of \eqref{eq;u1} blows up in a finite time $T^* > 0$ if and only if
    \begin{equation}\label{con_blow up}
        \liminf_{t\rightarrow T^*}\{ \inf_{x\in\R}u_x(t,x)\}=-\infty.
    \end{equation}
\end{lemm}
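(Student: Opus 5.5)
The plan is the standard continuation argument. Local well-posedness (Lemma \ref{le4.2}) holds in $H^s$ with $s>\frac32$, so the maximal time $T^*$ is finite exactly when $\limsup_{t\to T^*}\|u(t)\|_{H^s}=\infty$. It therefore suffices to show two things:
\begin{enumerate}[(i)]
\item a bound on $\|u_x\|_{L^\infty}$ on $[0,T)$ keeps $\|u\|_{H^s}$ bounded there;
\item under $f''\ge\gamma>0$, a lower bound on $u_x$ alone already bounds $\|u_x\|_{L^\infty}$.
\end{enumerate}
The converse direction is immediate. If \eqref{con_blow up} holds, then $\|u_x\|_{L^\infty}\to\infty$ along a sequence of times. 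Since $H^s\hookrightarrow W^{1,\infty}$ for $s>\frac32$, this forces $\|u\|_{H^s}\to\infty$, so the solution blows up.

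\emph{Step 1: $H^s$ estimate.} Apply $\Lambda^s=(1-\partial_x^2)^{s/2}$ to \eqref{eq;u1}, pair with $\Lambda^s u$, and write
\begin{equation*}
f'(u)\Lambda^s u_x=\Lambda^s(f'(u)u_x)-[\Lambda^s,f'(u)]\partial_x u .
\end{equation*}
The transport part is handled by integration by parts, which gives a term $\tfrac12\int f''(u)u_x(\Lambda^s u)^2\,\mathrm{d}x$. The commutator is handled by the Kato--Ponce estimate together with the composition estimate $\|f'(u)-f'(0)\|_{H^s}\le C(\|u\|_{L^\infty})\|u\|_{H^s}$. The nonlocal term $P_x=\partial_x(1-\partial_x^2)^{-1}\big(g(u)+\tfrac12 f''(u)u_x^2\big)$ gains one derivative. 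Hence it is bounded in $H^s$ by
\begin{equation*}
\|g(u)\|_{H^{s-1}}+\|f''(u)u_x^2\|_{H^{s-1}}\le C(\|u\|_{L^\infty},\|u_x\|_{L^\infty})\,\|u\|_{H^s},
\end{equation*}
using Moser-type product estimates. The damping term $\lambda u$ only helps. Moreover $\|u\|_{L^\infty}\lesssim\|u\|_{H^1}\le\|\bar u\|_{H^1}$ by the energy identity of Lemma \ref{le4.2}. Altogether
\begin{equation*}
\frac{\mathrm{d}}{\mathrm{d}t}\|u\|_{H^s}^2\le C\big(1+\|u_x\|_{L^\infty}\big)\|u\|_{H^s}^2,
\end{equation*}
and Gronwall yields (i).

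\emph{Step 2: upper bound for $u_x$.} This is the step that uses $f''\ge\gamma$. Let $q(t,x)$ solve $q_t=f'(u(t,q))$ with $q(0,x)=x$; this is a flow of diffeomorphisms since $u\in C([0,T);C^1)$. Differentiating \eqref{eq;u1} in $x$ and using $P_{xx}=P-g(u)-\tfrac12f''(u)u_x^2$, the quantity $M(t)=u_x(t,q(t,x))$ satisfies
\begin{equation*}
M'=-\tfrac12 f''(u)M^2-\lambda M+g(u)-P .
\end{equation*}
Since $f''\ge\gamma>0$, the quadratic term is nonpositive. The source term $g(u)-P$ is bounded in $L^\infty$ by a constant $C_0$ depending only on $\|\bar u\|_{H^1}$, by \eqref{eq;g_estimate} and the bound on $\|P\|_{L^\infty}$ from Section \ref{sec-pre}. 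Hence $M'\le C_0-\lambda M$, which gives
\begin{equation*}
\sup_x u_x(t,x)\le\max\{\|\bar u_x\|_{L^\infty},C_0/\lambda\}
\end{equation*}
uniformly in time, because the characteristics cover $\mathbb{R}$. So if $\inf_x u_x$ stays bounded below on $[0,T^*)$, then $\|u_x\|_{L^\infty}$ is bounded there. By Step 1, $\|u\|_{H^s}$ stays bounded, contradicting blow-up. This proves the "only if" direction.

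\emph{Main obstacle: regularity.} The ODE computation in Step 2 requires $u_{xx}$, and more generally the pointwise manipulation needs roughly $s\ge3$. I would first prove the result for smooth data, e.g.\ $\bar u\in H^3$. Then I would pass to $s\in(\frac32,3)$ by approximating $\bar u$ in $H^s$, using continuous dependence on data and the fact that the lifespan does not depend on $s$. That $s$-independence itself follows from Step 1, since the $H^s$ bound is controlled only by $\|u_x\|_{L^\infty}$. The commutator and composition estimates for general smooth $f,g$ in Step 1 are routine but need care with the constant's dependence on $\|u\|_{L^\infty}$.
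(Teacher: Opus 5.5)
The paper does not prove Lemma~\ref{le4.3}. It is imported from \cite{Novruzov2019,Tian2014}, so there is no internal argument to compare with, and I have judged your proposal on its own terms. Your proof is correct. Step~1 reduces continuation to control of $\|u_x\|_{L^\infty}$. The Riccati identity $M'=-\tfrac12 f''(u)M^2-\lambda M+g(u)-P$ does follow from $P_{xx}=P-g(u)-\tfrac12 f''(u)u_x^2$, and it bounds $u_x$ from above for free, so a lower bound suffices. The approximation and $s$-independence argument handles the regularity issue. A few small points:

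\begin{itemize}
\item You were right to insist on the Moser bound $\|F(u)\|_{H^s}\le C(\|u\|_{L^\infty})\|u\|_{H^s}$. With the versions in Lemmas~\ref{lem-Fu-Fv}--\ref{lem-F0}, where $K=K(\|u\|_{H^s})$, the Gronwall inequality would become superlinear and would not close.
\item The piece $(f''(u)-f''(0))u_x^2$ of the nonlocal term also produces a factor $\|u_x\|_{L^\infty}^2$. This is harmless.
\item Only $f''\ge0$ is used; $\gamma>0$ plays no role in the criterion.
\item It is cleaner to take the approximating data in $H^\infty$ rather than $H^3$. Then the level-$s$ energy identity is classical for every $s$; with $H^3$ data it is not immediate once $s>\tfrac52$.
\end{itemize}

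For comparison, a classical Constantin--Escher-type route avoids characteristics altogether. From \eqref{eq;m},
\begin{equation*}
\tfrac12\tfrac{\mathrm{d}}{\mathrm{d}t}\|m\|_{L^2}^2=-\tfrac32\int f''(u)u_xm^2\,\mathrm{d}x-\lambda\|m\|_{L^2}^2+\tfrac12\int f'''(u)u_x^3m\,\mathrm{d}x+\int\big(2f''(u)u-g'(u)\big)u_xm\,\mathrm{d}x .
\end{equation*}
Using $u_x\ge-K$, $f''\ge0$ and $\|u_x\|_{L^\infty}^2\le\|u_x\|_{L^2}\|u_{xx}\|_{L^2}\le\|u\|_{H^1}\|m\|_{L^2}$, one gets $\frac{\mathrm{d}}{\mathrm{d}t}\|m\|_{L^2}^2\le C(1+K)\|m\|_{L^2}^2+C$. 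This gives $H^2$ control from the lower bound alone, with no upper bound on $u_x$ needed.

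That route is shorter, but it works at the $H^2$ level, so your Step~1 (plus density) is still needed to reach general $s>\frac32$. Your route, in exchange, produces the explicit time-uniform bound $\sup_x u_x\le\max\{\|\bar u_x\|_{L^\infty},C_0/\lambda\}$, which is useful in its own right.
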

As a preparation, we present the following lemmas.
    \begin{lemm}\cite{ConstantinBoussinesq}\label{lem-Fu-Fv}
       Let \(F\in C^{\infty}(\mathbb{R})\). If \(u,v\in H^{s}(\mathbb{R})\cap L^{\infty}(\mathbb{R})\), \(s\geq 0\), then
\[
\| F(u)-F(v)\|_{H^{s}(\mathbb{R})\cap L^{\infty}(\mathbb{R})}\leq K\| u-v\|_{H^{s}(\mathbb{R})\cap L^{\infty}(\mathbb{R})},
\]
where \(K\) depends on $\| u\|_{H^{s}}$ and $\| v\|_{H^{s}}$. In particular, when \(s>\frac{1}{2}\), we have
\[
\| F(u)-F(v)\|_{H^{s}(\mathbb{R})}\leq 2K\| u-v\|_{H^{s}(\mathbb{R})}. \]
    \end{lemm}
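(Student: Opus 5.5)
This is a Moser-type composition estimate, and I will prove it in two steps: first the $L^\infty$ part, then the $H^s$ part.

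\textbf{Reduction.} Write
\[
F(u)-F(v)=(u-v)\,G(u,v),\qquad G(u,v)=\int_0^1 F'\big(\theta u+(1-\theta)v\big)\,\mathrm{d}\theta .
\]
The $L^\infty$ part is immediate:
\[
\|F(u)-F(v)\|_{L^\infty}\le \sup_{|s|\le \|u\|_{L^\infty}+\|v\|_{L^\infty}}|F'(s)|\;\|u-v\|_{L^\infty}.
\]

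\textbf{The $H^s$ part.} I use the algebra-type product estimate in $H^s\cap L^\infty$:
\[
\|fh\|_{H^s}\lesssim \|f\|_{H^s}\|h\|_{L^\infty}+\|f\|_{L^\infty}\|h\|_{H^s},
\]
valid for $s\ge0$. It is proved by a Littlewood--Paley/Bony decomposition, or, for integer $s$, directly by Leibniz and Gagliardo--Nirenberg. Apply it with $f=u-v$ and $h=G(u,v)$.

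One difficulty is that $G$ need not vanish at $0$. I therefore split
\[
G(u,v)=F'(0)+\big(G(u,v)-F'(0)\big).
\]
The constant piece contributes at most $|F'(0)|\,\|u-v\|_{H^s}$. The second piece, $\tilde G=G-F'(0)$, is a smooth function of $(u,v)$ vanishing at the origin.

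\textbf{Main obstacle.} The key step is the Moser composition estimate
\[
\|\tilde G(u,v)\|_{H^s}\le C\big(\|u\|_{L^\infty},\|v\|_{L^\infty}\big)\big(\|u\|_{H^s}+\|v\|_{H^s}\big).
\]
For $0\le s\le1$, this follows from the Lipschitz bound on $\tilde G$ together with the difference-quotient characterization of $H^s$. For larger $s$, I would use the Meyer paralinearization/telescoping argument
\[
\tilde G(S_{j+1}w)-\tilde G(S_jw)=\Delta_j w\cdot m_j,
\]
with $m_j$ bounded in $L^\infty$ together with its derivatives. Alternatively, for integer $s$, the chain rule combined with Gagliardo--Nirenberg interpolation gives the same bound. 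Combining the pieces yields
\[
\|F(u)-F(v)\|_{H^s}\le K\big(\|u-v\|_{H^s}+\|u-v\|_{L^\infty}\big),
\]
with $K$ depending on the $L^\infty$ and $H^s$ norms of $u$ and $v$.

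\textbf{The case $s>\frac12$.} By the Sobolev embedding, $\|w\|_{L^\infty}\lesssim\|w\|_{H^s}$. Hence $K$ depends only on $\|u\|_{H^s}$ and $\|v\|_{H^s}$, and $\|u-v\|_{L^\infty}\le C\|u-v\|_{H^s}$. Absorbing the embedding constant into $K$ gives
\[
\|F(u)-F(v)\|_{H^s}\le 2K\|u-v\|_{H^s}.
\]
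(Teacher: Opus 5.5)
The paper does not prove this lemma; it simply quotes it from Constantin--Molinet \cite{ConstantinBoussinesq}. Your argument is the standard proof of such estimates, and it is correct: the factorization $F(u)-F(v)=(u-v)\int_0^1F'(\theta u+(1-\theta)v)\,\mathrm{d}\theta$, splitting off the constant $F'(0)$ so that the remainder lies in $H^s$, the product estimate in $H^s\cap L^\infty$, and a Moser composition bound for the remainder.

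Your constant $K$ depends on $\|u\|_{L^\infty}$ and $\|v\|_{L^\infty}$ as well as on the $H^s$ norms. This is not a weakness of your proof. For $s\le\frac12$ the lemma as printed, with $K$ depending only on $\|u\|_{H^s}$ and $\|v\|_{H^s}$, is false. Take $s=0$, $F(x)=x^2$, $v=0$ and $u_n(x)=n\phi(n^2x)$, where $\phi$ is the indicator function of $[0,1]$. Then $\|u_n\|_{L^2}=1$ and $\|u_n\|_{L^2\cap L^\infty}=1+n$, but $\|F(u_n)\|_{L^2\cap L^\infty}\ge n^2$. So one would need $n^2\le K(1,0)(1+n)$ for every $n$, which is impossible. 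The statement must therefore be read with $K=K(\|u\|_{H^s\cap L^\infty},\|v\|_{H^s\cap L^\infty})$, and that is exactly what you prove. For $s>\frac12$ the distinction disappears, as you note. This is the only regime the paper actually uses: regularity $s-1>\frac12$ in the proof of Theorem~\ref{Thm 4.1}.

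Two minor points:
\begin{enumerate}
\item In Meyer's telescoping step, the multipliers $m_j=\int_0^1\nabla\tilde G(S_jw+\tau\Delta_jw)\,\mathrm{d}\tau$ are not bounded uniformly in $j$ together with their derivatives. What holds, and what the argument needs, is $\|\partial_x^\alpha m_j\|_{L^\infty}\le C_\alpha(\|w\|_{L^\infty})\,2^{j\alpha}$.
\item You can avoid the two-variable composition entirely. Apply the one-variable Moser estimate to $F'(\theta u+(1-\theta)v)-F'(0)$ for each fixed $\theta$, then integrate in $\theta$ using Minkowski's inequality.
\end{enumerate}
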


    \begin{lemm}\cite{ConstantinBoussinesq}\label{lem-F0}
      Let \(F\in C^{\infty}(\mathbb{R})\). If \(u\in H^{s}(\mathbb{R})\cap L^{\infty}(\mathbb{R})\), \(s\geq 0\), and \(F(0)=0\), then
\[
\| F(u)\|_{H^{s}(\mathbb{R})\cap L^{\infty}(\mathbb{R})}\leq K\| u\|_{H^{s}(\mathbb{R})\cap L^{\infty}(\mathbb{R})},
\]
where \(K\) depends on \(\| u\|_{H^{s}}\). In particular, when \(s>\frac{1}{2}\), we have
\[
\| F(u)\|_{H^{s}(\mathbb{R})}\leq 2K\| u\|_{H^{s}(\mathbb{R})}.\]   
    \end{lemm}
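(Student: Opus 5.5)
The plan is to reduce everything to the structure $F(0)=0$, which lets us factor out $u$ linearly, and then to control the $H^s$ part by a chain-rule estimate whose constant depends only on $\|u\|_{L^\infty}$. The $L^\infty$ bound is immediate. Writing
\[
F(u)=u\,G(u),\qquad G(u)=\int_0^1F'(\tau u)\,\mathrm{d}\tau ,
\]
we get the pointwise estimate $|F(u(x))|\le \sup_{|r|\le\|u\|_{L^\infty}}|F'(r)|\,|u(x)|$. This simultaneously gives $\|F(u)\|_{L^\infty}\le C(\|u\|_{L^\infty})\|u\|_{L^\infty}$ and $\|F(u)\|_{L^2}\le C(\|u\|_{L^\infty})\|u\|_{L^2}$, which already settles the case $s=0$.

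For integer $s=k\ge1$, I will apply the Faà di Bruno formula:
\[
\partial_x^kF(u)=\sum_{j=1}^k F^{(j)}(u)\sum_{\alpha_1+\dots+\alpha_j=k,\ \alpha_i\ge1}c_\alpha\,\partial_x^{\alpha_1}u\cdots\partial_x^{\alpha_j}u .
\]
Each factor $F^{(j)}(u)$ is bounded by a constant depending on $\|u\|_{L^\infty}$. By H\"older's inequality with exponents $p_i=2k/\alpha_i$ and the Gagliardo--Nirenberg inequality $\|\partial^{\alpha_i}u\|_{L^{2k/\alpha_i}}\lesssim \|u\|_{L^\infty}^{1-\alpha_i/k}\|\partial^k u\|_{L^2}^{\alpha_i/k}$, each product is bounded by $\|u\|_{L^\infty}^{j-1}\|\partial^k u\|_{L^2}$. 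Together with the $L^2$ bound above, this gives $\|F(u)\|_{H^k}\le C(\|u\|_{L^\infty})\|u\|_{H^k}$.

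For non-integer $s$, where I expect the main difficulty, I will use Meyer's paralinearization via a Littlewood--Paley decomposition $u=\sum_q\Delta_q u$, $S_q=\sum_{p<q}\Delta_p$. Since $F(0)=0$, the telescoping sum gives
\[
F(u)=F(S_0u)+\sum_{q\ge0}\big(F(S_{q+1}u)-F(S_qu)\big)=F(S_0u)+\sum_{q\ge0}m_q\,\Delta_qu,
\]
with $m_q=\int_0^1F'(S_qu+\tau\Delta_qu)\,\mathrm{d}\tau$. The low-frequency piece $F(S_0u)$ is handled by the integer case, or by the $L^2$ bound plus the smoothness of $S_0u$. The coefficients satisfy $\|\partial^\ell m_q\|_{L^\infty}\le C_\ell(\|u\|_{L^\infty})2^{q\ell}$, because every derivative falling on $S_qu$ costs at most $2^q\|u\|_{L^\infty}$. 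The standard lemma on series $\sum_q m_q\Delta_qu$ with such symbol-type bounds then yields $\|\sum_q m_q\Delta_qu\|_{H^s}\le C\,\|(2^{qs}\|\Delta_qu\|_{L^2})\|_{\ell^2}\simeq C\|u\|_{H^s}$. To prove that lemma, I will split each term $m_q\Delta_qu$ into frequencies below and above $2^q$, and use $\ell$ derivatives with $\ell>s$ to get summable decay for the high-frequency tails.

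Combining the two parts gives $\|F(u)\|_{H^s\cap L^\infty}\le K\|u\|_{H^s\cap L^\infty}$, with $K$ depending only on $\|u\|_{L^\infty}$; in particular $K$ is controlled by $\|u\|_{H^s\cap L^\infty}$. For $s>\frac12$, the Sobolev embedding $\|u\|_{L^\infty}\le\|u\|_{H^s}$ (with the $H^s$ norm normalized so that the embedding constant is at most $1$) makes $K$ depend on $\|u\|_{H^s}$ alone. It then gives
\[
\|F(u)\|_{H^s}\le K\big(\|u\|_{H^s}+\|u\|_{L^\infty}\big)\le 2K\|u\|_{H^s}.
\]
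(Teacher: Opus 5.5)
The paper contains no proof of this lemma to compare yours with. It imports the result from \cite{ConstantinBoussinesq} and uses it as a black box, only in the proof of Theorem~\ref{Thm 4.1} and at regularity indices above $\frac12$.

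Your argument is correct, and it is the standard self-contained proof of the composition (Moser) estimate, as in the composition theorem of Bahouri--Chemin--Danchin. Its pieces are:
\begin{itemize}
\item the factorization $F(u)=uG(u)$ for the $L^2$ and $L^\infty$ parts;
\item Fa\`a di Bruno plus the Gagliardo--Nirenberg inequality $\|\partial^{\alpha}u\|_{L^{2k/\alpha}}\lesssim\|u\|_{L^\infty}^{1-\alpha/k}\|\partial^k u\|_{L^2}^{\alpha/k}$ for integer $s$;
\item Meyer's first linearization $F(u)-F(S_0u)=\sum_q m_q\Delta_q u$, with $\|\partial^\ell m_q\|_{L^\infty}\le C_\ell(\|u\|_{L^\infty})2^{q\ell}$, for general $s$.
\end{itemize}
The telescoping argument already covers every $s>0$, so the integer case is only a convenience; you use it for $F(S_0u)$.

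The details you leave implicit are routine:
\begin{itemize}
\item The chain rule for $u\in H^k\cap L^\infty$ follows by mollification, which preserves the $L^\infty$ bound.
\item The telescoping series converges in $L^2$ because $\sup_q\|S_qu\|_{L^\infty}\lesssim\|u\|_{L^\infty}$ and $F$ is locally Lipschitz.
\item In the summation lemma, the pieces of $m_q\Delta_q u$ at frequencies $2^j\lesssim 2^q$ are summed using the factor $2^{-(q-j)s}$, which needs $s>0$. Only the tails $2^j\gg 2^q$ need $\ell>s$ derivatives.
\end{itemize}

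Your constant $K=K(\|u\|_{L^\infty})$ is not a weakening of the statement; it is the correct form. For $s\le\frac12$ the constant cannot depend on $\|u\|_{H^s}$ alone. Take $F(u)=u^3$ and bounded $u$ with $\|u\|_{H^s}=1$ and $\|u\|_{L^\infty}\to\infty$; such $u$ exist since $H^s\not\hookrightarrow L^\infty$. The stated inequality would then force $\|u\|_{L^\infty}^3\le K(1)\bigl(1+\|u\|_{L^\infty}\bigr)$, which is impossible.

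For $s>\frac12$ your version implies the stated one through the embedding, which is the only case the paper uses. The factor $2$ requires, as you note, a normalization with embedding constant at most $1$. Otherwise you get $(1+C_s)K$, which changes nothing.
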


\begin{lemm}\cite{Kato}\label{lem-commu}
	Let $s>0$ and $\Lambda^s=(1-\partial^2_{x})^{\frac{s}{2}}$, then
	\begin{align*}
		\|[\Lambda^s,u]v\|_{L^2(\mathbb{R})}\le C\left(\|u_x\|_{L^{\infty}(\mathbb{R})}\|\Lambda^{s-1}v\|_{L^2(\mathbb{R})}+\|\Lambda^su\|_{L^{2}(\mathbb{R})}\|v\|_{L^{\infty}(\mathbb{R})}\right),
	\end{align*}
	where $C$ is a constant independent of $u$ and $v$.
\end{lemm}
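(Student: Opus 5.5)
We would prove the estimate by a Littlewood--Paley (Bony paraproduct) decomposition, following the Kato--Ponce strategy. Let $\Delta_j$ ($j\ge -1$) be a dyadic frequency decomposition, $S_j=\sum_{k\le j-1}\Delta_k$, and write $T_ab=\sum_j S_{j-1}a\,\Delta_j b$ for the paraproduct. Let $R(a,b)=\sum_{|j-j'|\le1}\Delta_j a\,\Delta_{j'}b$ be the remainder. Writing $[\Lambda^s,u]v=\Lambda^s(uv)-u\Lambda^s v$ and expanding both products, we get
\begin{equation*}
[\Lambda^s,u]v=[\Lambda^s,T_u]v+\Lambda^s T_v u+\Lambda^s R(u,v)-T_{\Lambda^s v}u-R(u,\Lambda^s v).
\end{equation*}
We estimate the five terms separately. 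Only the first one should produce $\|u_x\|_{L^\infty}\|\Lambda^{s-1}v\|_{L^2}$. The other four should produce $\|\Lambda^s u\|_{L^2}\|v\|_{L^\infty}$.

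\emph{First term.} We have $[\Lambda^s,T_u]v=\sum_j[\Lambda^s\widetilde\Delta_j,S_{j-1}u]\Delta_j v$, where $\widetilde\Delta_j$ is a slightly fattened block; this uses that $S_{j-1}u\,\Delta_j v$ has spectrum in an annulus of size $2^j$. The operator $\Lambda^s\widetilde\Delta_j$ has kernel $2^{js}2^j h(2^j\cdot)$ with $h$ Schwartz. Writing the commutator through its kernel and applying the mean value theorem to $S_{j-1}u(x)-S_{j-1}u(y)$ gives
\begin{equation*}
\|[\Lambda^s\widetilde\Delta_j,S_{j-1}u]\Delta_j v\|_{L^2}\lesssim 2^{j(s-1)}\|u_x\|_{L^\infty}\|\Delta_j v\|_{L^2}.
\end{equation*}
These pieces are almost orthogonal, so square-summing gives $\|u_x\|_{L^\infty}\|\Lambda^{s-1}v\|_{L^2}$.

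\emph{Paraproduct terms.} For $\Lambda^sT_vu$, each block is localized at frequency $\sim2^j$. Combined with $\|S_{j-1}v\|_{L^\infty}\lesssim\|v\|_{L^\infty}$, this gives the bound $\|v\|_{L^\infty}\|\Lambda^s u\|_{L^2}$. For $T_{\Lambda^s v}u$, Bernstein's inequality gives $\|S_{j-1}\Lambda^s v\|_{L^\infty}\lesssim\sum_{k\le j}2^{ks}\|v\|_{L^\infty}\lesssim 2^{js}\|v\|_{L^\infty}$. This geometric sum is where $s>0$ is used. Pairing with $\|\Delta_j u\|_{L^2}$ and using almost orthogonality again yields $\|v\|_{L^\infty}\|\Lambda^s u\|_{L^2}$. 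The low-frequency block $j=-1$ is trivial in every term.

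\emph{Remainder terms (main obstacle).} The difficulty is that the blocks of $R$ are not localized in an annulus, only in balls. So a naive bound by $\sum_j 2^{js}\|\Delta_j u\|_{L^2}\|v\|_{L^\infty}$ would require an $\ell^1$ sum. To avoid this we localize the output. For $R(u,\Lambda^s v)$ (and similarly for $\Lambda^sR(u,v)$) we estimate
\begin{equation*}
2^{ks}\|\Delta_k R\|_{L^2}\lesssim\sum_{j\ge k-2}2^{(k-j)s}\,\bigl(2^{js}\|\Delta_j u\|_{L^2}\bigr)\|v\|_{L^\infty}.
\end{equation*}
Here we use $\|\widetilde\Delta_j\Lambda^sv\|_{L^\infty}\lesssim2^{js}\|v\|_{L^\infty}$ in the first case, and the factor $2^{ks}$ from $\Lambda^s$ in the second. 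Since $s>0$, the sequence $2^{(k-j)s}\mathbf 1_{j\ge k-2}$ is summable. Young's inequality for sequences then gives an $\ell^2_k$ bound by $\|\Lambda^s u\|_{L^2}\|v\|_{L^\infty}$.

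Adding the five estimates gives the lemma, with $C$ depending only on $s$.
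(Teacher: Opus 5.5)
\textbf{There is a genuine gap: the remainder term $R(u,\Lambda^s v)$ is not closed by your argument.} Your decomposition is correct. The bounds for $[\Lambda^s,T_u]v$, $\Lambda^sT_vu$, $T_{\Lambda^sv}u$ and $\Lambda^sR(u,v)$ go through. (For the inhomogeneous $\Lambda^s$, the kernel of $\Lambda^s\widetilde\Delta_j$ is $2^{js}2^jh_j(2^j\cdot)$ with $h_j$ uniformly Schwartz, a harmless adjustment.) The paper itself gives no proof of this lemma; it only cites Kato--Ponce.

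For $R(u,\Lambda^s v)$ you need an $L^2$ bound, i.e.\ $\ell^2_k$-summability of $\|\Delta_kR(u,\Lambda^sv)\|_{L^2}$ with no weight $2^{ks}$. The blockwise argument only yields
\begin{equation*}
\|\Delta_kR(u,\Lambda^sv)\|_{L^2}\lesssim\sum_{j\ge k-2}\|\Delta_ju\|_{L^2}\|\widetilde\Delta_j\Lambda^sv\|_{L^\infty}\lesssim\sum_{j\ge k-2}\bigl(2^{js}\|\Delta_ju\|_{L^2}\bigr)\|v\|_{L^\infty}.
\end{equation*}
The factor $2^{js}$ produced by $\Lambda^sv$ is used up entirely in forming $2^{js}\|\Delta_ju\|_{L^2}$, so no decaying factor $2^{(k-j)s}$ is left.

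Your display, read for this term, is equivalent to $\|\Delta_kR\|_{L^2}\lesssim\sum_{j\ge k-2}\|\Delta_ju\|_{L^2}\|v\|_{L^\infty}$, and that is false. Try $u=a(x)\cos(2^jx)$ and $v=\cos(2^jx)$ with $\widehat a$ supported near $0$: the low-frequency part of $u\,\Lambda^sv$ has $L^2$ norm $\sim2^{js}\|a\|_{L^2}$. Even if the display held, it would bound the $H^s$ norm of $R(u,\Lambda^sv)$, not its $L^2$ norm.

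Since $\mathbf 1_{j\ge k-2}\notin\ell^1$, Young's inequality is unavailable. You are back at exactly the $\ell^1$ obstruction you set out to avoid. This is the endpoint of the paraproduct remainder estimate: $u\in B^s_{2,2}$ and $\Lambda^sv\in B^{-s}_{\infty,\infty}$ have regularity indices summing to $0$, and there the hypothesis $s>0$ buys nothing.

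Closing this term needs an ingredient beyond dyadic bookkeeping with $L^\infty$ bounds on individual blocks. Two options:

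\emph{Coifman--Meyer.} This is where Kato--Ponce invoke the bilinear multiplier theorem. Write $R(u,\Lambda^sv)=B(\Lambda^su,v)$, where $B$ has symbol $\sigma(\xi,\eta)=\sum_{|j-j'|\le1}\varphi_j(\xi)\varphi_{j'}(\eta)\langle\xi\rangle^{-s}\langle\eta\rangle^{s}$. Here $\varphi_j$ is the symbol of $\Delta_j$ and $\langle\xi\rangle=(1+\xi^2)^{1/2}$. Since $|\xi|\sim|\eta|$ on the support, $|\partial_\xi^a\partial_\eta^b\sigma(\xi,\eta)|\lesssim(|\xi|+|\eta|)^{-a-b}$. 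Hence $B$ maps $L^2\times L^\infty$ into $L^2$.

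\emph{Duality and Carleson measures.} For $w\in L^2$, the $j$-th block has spectrum in $\{|\zeta|\lesssim2^j\}$, so $w$ may be replaced there by $S_{j+N}w$. Cauchy--Schwarz in $(j,x)$ then reduces matters to
\begin{equation*}
\sum_j\int_{\mathbb{R}}|S_{j+N}w|^2\,\bigl|2^{-js}\widetilde\Delta_j\Lambda^sv\bigr|^2\,\mathrm{d}x\lesssim\|v\|_{\mathrm{BMO}}^2\|w\|_{L^2}^2,
\end{equation*}
which is the Carleson-measure (BMO paraproduct) estimate.

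With either of these added, the rest of your argument stands.
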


\section{Global conservative weak solutions}\label{sec-weak}

\subsection{An equivalent semilinear system}
In this section, we will derive a semilinear system by introducing some new variables. Let $\bar{u}\in H^1(\mathbb{R})$ be the initial data. We consider the variable $\xi \in \mathbb{R}$ and define the non-decreasing map $\xi \mapsto \bar{y}(\xi)$ via
\begin{equation}\label{bary_def}
	\int_0^{\bar{y}(\xi)}(1+\bar{u}_x^2)\,\mathrm{d}x=\xi,
\end{equation}
which implies that
\begin{equation}\label{eq;bar y xi}
\bar{y}_\xi(\xi)(1+\bar{u}_x^2(\bar{y}(\xi)))=1.
\end{equation}
Assume the solution $u$ to equation \eqref{eq;u1} is Lipschitz continuous on $[0, T]$. Using the independent variables $(t, \xi)$, we now derive an equivalent system. Let $t\mapsto y(t, \xi)$ be the characteristic starting at $\bar{y}(\xi)$ satisfying
\begin{equation}\label{eq;y_t}
\frac{\partial}{\partial t}y(t,\xi)=f'(e^{-\lambda t}k(t,y(t,\xi))),\quad y(0,\xi)=\bar{y}(\xi).
\end{equation}
Hereafter, we denote
$$ k(t,\xi)=k(t,y(t,\xi)),\quad \widetilde{P}(t,\xi)=\widetilde{P}(t,y(t,\xi)),\quad \widetilde{P}_x(t,\xi)=\widetilde{P}_x(t,y(t,\xi)), $$
and define
\begin{equation}\label{eq;v_def,q_def}
	v(t,\xi)=2\arctan k_x(t,\xi),\quad q(t,\xi)=(1+k_x^2(t,\xi))\frac{\partial y}{\partial \xi},
\end{equation}
where $k_x(t,\xi)=k_x(t,y(t,\xi))$ and $q(0,\xi)=1$. The following identities will be useful in the sequel:
\begin{equation}\label{eq;identities}
	\frac{1}{1+k_{x}^{2}}=\cos^{2}\frac{v}{2},\quad
	\frac{k_{x}^{2}}{1+k_{x}^{2}}=\sin^{2}\frac{v}{2},\quad
	\frac{k_{x}}{1+k_{x}^{2}}=\frac{1}{2}\sin v,\quad 
	\frac{\partial y}{\partial\xi}=\frac{q}{1+k_{x}^{2}}=\cos^{2}\frac{v}{2}\cdot q,
\end{equation}
  which immediately implies that 
  \begin{equation}\label{eq;y(t,xi')-y(t,xi)}
  y(t,\xi^{\prime})-y(t,\xi)=\int_\xi^{\xi^{\prime}}\cos^2\frac{v(t,s)}{2}\cdot q(t,s)\,\mathrm{d}s.
  \end{equation}
  Thanks to \eqref{eq;identities} and \eqref{eq;y(t,xi')-y(t,xi)}, together with the change of variables $x=y(t,\xi')$, we can express 
$\widetilde{P}$ and $\widetilde{P}_x$ in terms of the new variable $\xi$, namely,
\begin{align}\label{p(t,xi)}
\w(t,\xi)&=\frac{1}{2}\int^{+\infty}_{-\infty}e^{-|y(t,\xi)-x|}\Big(e^{\lambda t}g(e^{-\lambda t}k)+\frac12 e^{-\lambda t}f''(e^{-\lambda t}k)k_x^2\Big)\,\mathrm{d}x\notag\\
&=\frac12 \int^{+\infty}_{-\infty}e^{-|\int^{\xi'}_{\xi}\cos^2\frac{v(s)}{2}q(s)ds|}\Big(e^{\lambda t}g(e^{-\lambda t}k)\cos^2\frac{v}{2}+\frac12 e^{-\lambda t}f''(e^{-\lambda t}k)\sin^2\frac{v}{2}\Big)q\,\mathrm{d}\xi'
\end{align}
and
\begin{align}\label{p_x(t,xi)}
\w_x(t,\xi)&=\frac{1}{2}\Big(\int^{+\infty}_{y(t,\xi)}-\int^{y(t,\xi)}_{-\infty}\Big) e^{-|y(t,\xi)-x|}\Big(e^{\lambda t}g(e^{-\lambda t}k)+\frac12 e^{-\lambda t}f''(e^{-\lambda t}k)k_x^2\Big)\,\mathrm{d}x\notag\\
&=\frac12 \Big(\int^{+\infty}_{\xi}-\int^{\xi}_{-\infty}\Big) e^{-|\int^{\xi'}_{\xi}\cos^2\frac{v(s)}{2}q(s)ds|}\Big(e^{\lambda t}g(e^{-\lambda t}k)\cos^2\frac{v}{2}+\frac12 e^{-\lambda t}f''(e^{-\lambda t}k)\sin^2\frac{v}{2}\Big)q\,\mathrm{d}\xi'.
\end{align}

We now derive the evolution equations for $(k, v, q)$ in terms of the variables $(t,\xi)$, which form a semilinear system. Firstly, for $k$, we have
\begin{equation}\label{eq;k_t}
\frac{\partial}{\partial t}k(t,\xi)=k_t+f'(e^{-\lambda t}k)k_x=-\w_x(t,\xi).
\end{equation}
Secondly, we derive an evolution equation for $q$ in \eqref{eq;v_def,q_def}. Using \eqref{eq;(k_x^2)_t}, \eqref{eq;y_t} and \eqref{eq;identities}, we obtain
\begin{align}\label{eq;q_t}
	\frac{\partial}{\partial t}q(t,\xi)&=\frac{\partial}{\partial t}(1+k_x^2(t,y(t,\xi)))\frac{\partial y}{\partial t}+(1+k_x^2)\frac{\partial^2 y}{\partial t\partial\xi}\notag\\
	&=\Big[(1+k_x^2)_t+(1+k_x^2)_xf'(e^{-\lambda t}k)+(1+k_x^2)f''(e^{-\lambda t}k)e^{-\lambda t}k_x \Big]\cdot\frac{\partial y}{\partial \xi} \notag\\
	&=\Big[(1+k_x^2)_t+((1+k_x^2)f'(e^{-\lambda t}k))_x \Big]\cdot\frac{q}{1+k_x^2} \notag\\
	&=\Big[(f'(e^{-\lambda t}k))_x+2e^{\lambda t}g(e^{-\lambda t}k)k_x-2k_x\w \Big]\cdot\frac{q}{1+k_x^2} \notag\\
	&=\Big[(f''(e^{-\lambda t}k)e^{-\lambda t}+2e^{\lambda t}g(e^{-\lambda t}k)-2\w \Big]k_x\cdot\frac{q}{1+k_x^2}\notag\\
	&=\frac12 \sin v\cdot q\Big[(f''(e^{-\lambda t}k)e^{-\lambda t}+2e^{\lambda t}g(e^{-\lambda t}k)-2\w \Big].
\end{align}
Finally, using \eqref{eq;k_xt} and \eqref{eq;identities} yields
\begin{align}\label{eq;v_t}
	\frac{\partial}{\partial t}v(t,\xi)&=\frac{2}{1+k_x^2}(k_{xt}+k_{xx}f'(e^{-\lambda t}k))\notag\\
	&=\frac{2}{1+k_x^2}(e^{\lambda t}g(e^{-\lambda t}k)-\frac{1}{2}e^{-\lambda t}f''(e^{-\lambda t}k)k_x^2-\widetilde{P})\notag\\
	&=2(e^{\lambda t}g(e^{-\lambda t}k)-\w)\cos^2\frac{v}{2}-e^{-\lambda t}f''(e^{-\lambda t}k)\sin^2\frac{v}{2}.
\end{align}
Therefore, combining \eqref{p(t,xi)}-\eqref{eq;v_t}, a semilinear system for $(k,v,q)$ in terms of $(t,\xi)$ is derived.

\subsection{Global solutions of the semilinear system}
In this section, we prove the global existence of solutions for the semilinear system. Given the initial data $\bar{u}(x)\in H^1(\mathbb{R})$, we can transfer the Cauchy problem \eqref{eq;k} into the following semilinear system
\begin{equation}\label{eq;semilinear system}
\left\{\begin{array}{l}
\frac{\partial k}{\partial t}=-\w_x,\\
\frac{\partial v}{\partial t}=2(e^{\lambda t}g(e^{-\lambda t}k)-\w)\cos^2\frac{v}{2}-e^{-\lambda t}f''(e^{-\lambda t}k)\sin^2\frac{v}{2},\\
\frac{\partial q}{\partial t}=\frac12 \sin v\cdot q\Big((f''(e^{-\lambda t}k)e^{-\lambda t}+2e^{\lambda t}g(e^{-\lambda t}k)-2\w \Big),
\end{array}\right.
\end{equation}
with initial data
\begin{equation}\label{eq;initial data of semilinear system}
\left\{\begin{array}{l}
k(0,\xi)=\bar{u}(\bar{y}(\xi)),\\
v(0,\xi)=2\arctan\bar{u}_x(\bar{y}(\xi)),\\
q(0,\xi)=1,\\
\end{array}\right.
\end{equation}
where $\w$ and $\w_x$  are defined according to \eqref{p(t,xi)} and \eqref{p_x(t,xi)}. We regard
\eqref{eq;semilinear system} as an ordinary differential equation in the Banach space
$$X\triangleq H^1(\mathbb{R})\times\left(L^2(\mathbb{R})\cap L^\infty(\mathbb{R})\right)\times L^\infty(\mathbb{R})$$
with norm
$$\left\|(k,v,q)\right\|_X\triangleq\|u\|_{H^1}+\|v\|_{L^2}+\|v\|_{L^\infty}+\|q\|_{L^\infty}.$$
Our main theorem can be stated as follows.

\begin{theo}\label{Thm 3.1}
	Let $\bar{u}\in H^1(\mathbb{R})$. Then the Cauchy problem \eqref{eq;semilinear system}-\eqref{eq;initial data of semilinear system} has a unique
	solution, defined for all times $t\geq 0$.
\end{theo}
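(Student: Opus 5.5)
Following Bressan--Constantin \cite{Constantin2007}, the proof has two parts: a local existence result from a Lipschitz estimate in the Banach space $X$, and a global extension from an a priori bound that comes from the conserved energy $E(t)$.

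\textbf{Local existence.} We work on a closed domain
\[
\Omega=\Big\{(k,v,q)\in X:\ \|k\|_{H^1}\le \alpha,\ q(\xi)\in[q^-,q^+] \text{ for a.e. } \xi\Big\},\qquad q^->0 .
\]
On $\Omega$ we show that the right-hand side of \eqref{eq;semilinear system} is Lipschitz continuous in $X$, uniformly for $t\in[0,T]$.

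The terms $e^{\lambda t}g(e^{-\lambda t}k)$, $e^{-\lambda t}f''(e^{-\lambda t}k)$, $\sin v$ and $\sin^2\frac v2$ are smooth functions of $(k,v)$. They are bounded and Lipschitz because $\|k\|_{L^\infty}\lesssim\|k\|_{H^1}$ and $f,g\in C^\infty$; the estimate \eqref{eq;g_estimate} and $g(0)=0$ supply the needed $L^2$ control. The $L^2$ bounds on $v$ use $|\sin^2\frac v2|\le |v|^2/4\le \|v\|_{L^\infty}|v|/4$.

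The key step is the Lipschitz estimate for the nonlocal maps $(k,v,q)\mapsto \w,\w_x$ from \eqref{p(t,xi)}--\eqref{p_x(t,xi)}, into $H^1$ and $L^\infty$ respectively. This needs a decaying exponential weight. On any interval $[\xi,\xi']$ we have the lower bound
\[
\int_{\xi}^{\xi'}\cos^2\tfrac{v}{2}\,q\,\mathrm{d}s\ \ge\ \frac{q^-}{2}\Big(|\xi'-\xi|-\mathrm{meas}\{|v|\ge\tfrac\pi2\}\Big),
\]
and the set $\{|v|\ge\pi/2\}$ has measure at most $C\|v\|_{L^2}^2$. Hence the kernel is dominated by $C e^{-q^-|\xi-\xi'|/2}$, and Young's inequality gives the $L^2$ and $L^\infty$ bounds.

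We also need $\partial_\xi\w$ and $\partial_\xi\w_x$, so that $\w_x\in H^1$. Differentiating the integral representations expresses these derivatives through $\w$, $\w_x$ and the integrand, multiplied by $\cos^2\frac v2\, q$.

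With these bounds, the Picard (ODE) theorem in $X$ gives a unique local solution.

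\textbf{Global extension.} We prove a priori bounds that keep the solution in $\Omega$.

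First, conservation of energy. For the semilinear system we check directly that
\[
\frac{\mathrm{d}}{\mathrm{d}t}\int_{\mathbb R}\Big(k^2\cos^2\tfrac v2+\sin^2\tfrac v2\Big)q\,\mathrm{d}\xi=0 .
\]
This is the Lagrangian form of $E(t)=E(0)$ from Section~\ref{sec-pre}. The computation uses the identity $\partial_\xi\w=\w_x\, q\cos^2\frac v2$ and the relation between $\partial_\xi\w_x$ and the integrand. Together with $\partial_\xi k=\frac12 q\sin v$, which is preserved in time, this controls $\|k\|_{L^\infty}$ and hence $\|\w\|_{L^\infty},\|\w_x\|_{L^\infty}\le C(E(0))$.

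Next, the equation for $q$ gives
\[
|\partial_t \ln q|\le C\big(E(0),T\big),
\]
since $e^{\pm\lambda t}$ is bounded on $[0,T]$. So $q$ stays in $[e^{-Ct},e^{Ct}]$.

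Then $v$ satisfies $|v_t|\le C$, so $\|v\|_{L^\infty}$ grows at most linearly. The $L^2$ norm of $v$ obeys a Gronwall inequality, using $|\w|\lesssim$ the $L^2$-controlled quantities together with $\cos^2\frac v2\le1$ and $\sin^2\frac v2\lesssim|v|^2$. Similarly $\|k\|_{H^1}$ obeys a Gronwall inequality through $\partial_t k_\xi=-\partial_\xi\w_x$.

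Since all norms remain finite on every $[0,T]$, the local solution can be continued indefinitely.

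\textbf{Main obstacle.} The hardest step is the Lipschitz and exponential-weight estimate for $\w$ and $\w_x$, in particular the $H^1$ bound on $\w_x$ and the uniform exponential lower bound on $\int\cos^2\frac v2\, q$. Here we follow Bressan--Constantin's argument almost verbatim. The new factors $e^{\pm\lambda t}$ are harmless on bounded time intervals.
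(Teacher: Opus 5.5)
Your architecture is the paper's (and Bressan--Constantin's). You run Picard in $X$ with an exponentially weighted kernel, then continue using $E(t)=E_0$, the preserved identity $k_\xi=\frac12 q\sin v$, $\|k\|_{L^\infty}\le E_0^{1/2}$, $L^\infty$ bounds on $\w,\w_x$, and $e^{-Ct}\le q\le e^{Ct}$.

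\textbf{The gap is in the $L^2$ part of the continuation.} Gronwall for $\|v\|_{L^2}$ needs $\|\w(t)\|_{L^2}$. Gronwall for $\|k\|_{L^2}$, or for $k_\xi$ through $\partial_\xi\w_x$ (which contains $\w\cos^2\frac v2\,q$), needs $\|\w_x(t)\|_{L^2}$ and $\|\w\|_{L^2}$. These bounds must have coefficients depending only on $E_0$ and $T$.

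The only kernel decay you derived is the local one, where $\mathrm{meas}\{|v|\ge\frac\pi2\}$ is controlled by $C\|v\|_{L^2}^2$. Reusing it makes the constant in the bound for $\|\w\|_{L^2}$ grow superlinearly in $\|v\|_{L^2}$:
\begin{itemize}
\item like $e^{cq^-\|v\|_{L^2}^2}$ with your majorant $Ce^{-q^-|\xi-\xi'|/2}$;
\item still like $\|v\|_{L^2}^2$ with the sharper majorant $\min\{1,\cdot\}$.
\end{itemize}
The resulting differential inequality for $\|v\|_{L^2}$ is therefore superlinear and does not exclude blow-up on $[0,T]$.

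Nor can $\mathrm{meas}\{|v|\ge\frac\pi2\}$ be bounded by the energy in this phase. At $v=\pm\pi$ the equation gives $v_t=-e^{-\lambda t}f''(e^{-\lambda t}k)\neq0$. So $v$ can cross $\pm\pi$ and then sit near $\pm2\pi$, where $\sin^2\frac v2$ vanishes while $|v|\ge\frac\pi2$.

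\textbf{The missing idea.} The paper carries this out just before \eqref{eq;4.40}: redo the decay estimate in the global phase with the conserved energy in place of $\|v\|_{L^2}$. Let $C_q=e^{C(E_0)t}$, which bounds both $q$ and $1/q$.
\begin{enumerate}
\item Work with the set $\{\cos^2\frac v2<\frac12\}=\{\sin^2\frac v2>\frac12\}$. Its measure is at most $2\int\sin^2\frac v2\,\mathrm{d}\xi\le 2C_qE_0$.
\item This gives $\int_\xi^{\xi'}\cos^2\frac v2\,q\,\mathrm{d}s\ge\frac{|\xi'-\xi|}{2C_q}-E_0$. Hence the kernel has the majorant $\Gamma(\zeta)=\min\{1,e^{E_0-|\zeta|/(2C_q)}\}$, and $\|\Gamma\|_{L^1},\|\Gamma\|_{L^2}$ depend only on $E_0$ and $T$.
\item Split the integrand of $\w$ into a part $\lesssim|k|\cos^2\frac v2\,q$ and a part $\lesssim\sin^2\frac v2\,q$, and apply Young to each.
\begin{itemize}
\item The first contributes at most $\|\Gamma\|_{L^1}C_q^{1/2}\|k\cos\frac v2\,q^{1/2}\|_{L^2}\le\|\Gamma\|_{L^1}C_q^{1/2}E_0^{1/2}$.
\item The second contributes at most $\|\Gamma\|_{L^2}\|\sin^2\frac v2\,q\|_{L^1}\le\|\Gamma\|_{L^2}E_0$.
\end{itemize}
\end{enumerate}
Thus $\|\w\|_{L^2}\le C(E_0,T)$, and likewise $\|\w_x\|_{L^2}\le C(E_0,T)$. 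Both Gronwall inequalities then become linear and close. Finally, $\|k_\xi\|_{L^2}\le\frac12\|q\|_{L^\infty}\|v\|_{L^2}$ follows directly from $k_\xi=\frac12 q\sin v$, so you need no separate Gronwall argument through $\partial_\xi\w_x$.

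A minor point: your $\Omega$ should also bound $\|v\|_{L^2}$ and $\|v\|_{L^\infty}$, since your local estimates use both.
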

\begin{proof}
	Now we prove Theorem \ref{Thm 3.1} in two steps.  Our goal of the first step is to prove the local existence and uniqueness of solutions to the Cauchy problem \eqref{eq;semilinear system}-\eqref{eq;initial data of semilinear system}. By the standard theory of ordinary differential equations in Banach spaces, it suffices to show that the right-hand side of \eqref{eq;semilinear system} is locally Lipschitz continuous. In the second step, using the conservation of energy, we can then extend the local solution globally in time.
	
	\textbf{Step 1: Local existence}\par
	Define 
	$$\Omega=\Big\{(k,v,q)\left| \|k\|_{H^1}\leq\alpha,~\|v\|_{L^2}\leq\beta,\|v\|_{L^\infty}\leq\frac{3\pi}{2}, q(x)\in[q^{-},q^{+}],~ a.e.~x\in \mathbb{R}\right. \Big\}, $$
	for any constants $\alpha,~\beta,~q^-,~q^+>0.$
	To establish the local existence of solutions, it suffices to
	show that mapping $(k, v, q)$ to the right hand side of $\eqref{eq;semilinear system}$ is Lipschitz continuous on every bounded domain $\Omega \subset X$.\par
	A direct calculation using Sobolev's inequality 
	\begin{equation}\label{eq;sobolev's inequality}
	\|k\|_{L^\infty(\mathbb{R})}\leq \|k\|_{H^1(\mathbb{R})},
	\end{equation}
	combined with \eqref{eq;g_estimate} and \eqref{eq;f_estimate}, shows that the maps 
	$$2e^{\lambda t}g(e^{-\lambda t}k)\cos^2\frac{v}{2}-e^{-\lambda t}f''(e^{-\lambda t}k)\sin^2\frac{v}{2}\quad\text{and}\quad \frac12 \sin v\cdot q\Big((f''(e^{-\lambda t}k)e^{-\lambda t}+2e^{\lambda t}g(e^{-\lambda t}k)\Big) $$
	are all Lipschitz continuous from $\Omega$ into $L^2(\mathbb{R})\cap L^\infty(\mathbb{R})$. Hence, we only need to prove the maps
	\begin{equation}\label{Lip Continuous}
	(k,v,q)\mapsto \w,\quad(k,v,q)\mapsto \w_x
	\end{equation}
	are Lipschitz continuous from $\Omega$ into $H^1(\mathbb{R})$, which implies the maps above are also Lipschitz continuous from $\Omega$ into $L^2(\mathbb{R})\cap L^\infty(\mathbb{R})$.
	Following \cite{Constantin2007}, we first derive some necessary estimates. For $(k,v,q)\in\Omega$, we have
	\begin{align*}
	\text{measure}\left\{\xi\in\mathbb{R}\Big| |\frac{v(\xi)}2|\geq\frac\pi4\right\} &\leq\text{measure}\left\{\xi\in \mathbb{R}\Big|\sin^{2}\frac{v(\xi)}2\geq\frac1{18}\right\}\notag\\
	&\leq 18\int_{\left\{\xi\in\mathbb{R}|\sin^2\frac{v(\xi)}{2}\geq\frac{1}{18}\right\}}\sin^2\frac{v(\xi)}{2}\,\mathrm{d}\xi\notag\\
	&\leq\frac{9}{2} \beta^2.
	\end{align*}
	Then, for any $\xi_1<\xi_2$, we have
	\begin{align}\label{eq;4.5}
	\int_{\xi_{1}}^{\xi_{2}}\cos^{2}\frac{v(\xi)}{2}\cdot q(\xi)\,\mathrm{d}\xi
	&\geq\int_{\left\{\xi\in[\xi_{1},\xi_{2}]||\frac{v(\xi)}{2}|\leq\frac{\pi}{4}\right\}}\cos^2\frac{v(\xi)}{2}q(\xi)\,\mathrm{d}\xi\notag\\
	&\geq\int_{\left\{\xi\in[\xi_{1},\xi_{2}]||\frac{v(\xi)}{2}|\leq\frac{\pi}{4}\right\}}\frac{q^{-}}{2}\,\mathrm{d}\xi\notag\\
	&\geq\left(\frac{\xi_{2}-\xi_{1}}{2}-\frac{9}{4}\beta^{2}\right)q^{-}.
	\end{align}
	Define the exponentially decaying function by
	$$\Gamma(\zeta)\triangleq\min\left\{1,\exp\left((\frac{9}{4}\beta^2-\frac{|\zeta|}{2})q^-\right)\right\}. $$
	A straightforward calculation yields:
	\begin{equation}\label{eq;Gamma_L^1}
	\|\Gamma\|_{L^1}=\left(\int_{|\zeta|\leq \frac92\beta^2}+\int_{|\zeta|\geq\frac92\beta^2}\right)\Gamma(\zeta)\,\mathrm{d}\zeta=9\beta^2+\frac{4}{q^-}<\infty
	\end{equation}and
	\begin{equation}\label{eq;Gamma||_L^1}
	\|\Gamma|\cdot|\|_{L^1}=\left(\int_{|\zeta|\leq \frac92\beta^2}+\int_{|\zeta|\geq\frac92\beta^2}\right)\Gamma(\zeta)|\zeta|\,\mathrm{d}\zeta=\frac{81}{2}\beta^4+\frac{18}{q^-}\beta^2+\frac{8}{(q^-)^2}<\infty.
	\end{equation}
	
In what follows, we prove that $\w,\w_x\in H^1(\mathbb{R})$, namely $\w,\partial_\xi \w,\w_x,\partial_\xi \w_x\in L^2(\R)$. From the definition \eqref{p_x(t,xi)} of $\w_x$, applying Young's inequality together with \eqref{eq;g_estimate}, \eqref{eq;f_estimate} and \eqref{eq;4.5}, it follows that
\begin{align}\label{est of P}
\|\w_{x}\|_{L^{2}}&\leq\|\frac{q^{+}}{2}\Big( \Gamma*|e^{\lambda t}g(e^{-\lambda t}k)\cos^2\frac{v}{2}+\frac12 e^{-\lambda t}f''(e^{-\lambda t}k)\sin^2\frac{v}{2} |\Big)\|_{L^{2}} \notag\\
&\leq\frac{q^{+}}{2}\|\Gamma\|_{L^{1}}\|e^{\lambda t}g(e^{-\lambda t}k)\cos^2\frac{v}{2}+\frac12 e^{-\lambda t}f''(e^{-\lambda t}k)\sin^2\frac{v}{2}\|_{L^{2}}\notag\\
&\leq Cq^{+} \|\Gamma\|_{L^{1}}\left(\|k\|_{L^{2}}+e^{-\lambda t}\|v\|_{L^{2}}\right)<\infty.
\end{align}
For $\partial_\xi \w_x$, differentiating \eqref{p_x(t,xi)} with respect to $\xi$ gives
\begin{align*}
	\partial\xi\w_{x}&=-\left( e^{\lambda t}g(e^{-\lambda t}k)\cos^2\frac{v}{2}+\frac12 e^{-\lambda t}f''(e^{-\lambda t}k)\sin^2\frac{v}{2}\right)(\xi)\cdot q(\xi)\notag\\
	&~~~+\frac{1}{2}\left(\int_{\xi}^{\infty}-\int_{-\infty}^{\xi}\right)e^{-\left|\int_{\xi}^{\xi^{\prime}}\cos^{2}\frac{v(s)}{2}q(s)ds\right|} \cos^2\frac{v(\xi)}{2}q(\xi)\mathrm{sign}(\xi^{\prime}-\xi)\notag\\
	&~~~\cdot\left( e^{\lambda t}g(e^{-\lambda t}k)\cos^2\frac{v}{2}+\frac12 e^{-\lambda t}f''(e^{-\lambda t}k)\sin^2\frac{v}{2}\right)(\xi')\cdot q\left(\xi^{\prime}\right)\,\mathrm{d}\xi^{\prime},
	\end{align*}
which implies 
\begin{align*}
|\partial\xi\w_{x}|&\leq q^+ |e^{\lambda t}g(e^{-\lambda t}k)\cos^2\frac{v}{2}+\frac12 e^{-\lambda t}f''(e^{-\lambda t}k)\sin^2\frac{v}{2}|\notag\\
&~~~+\frac{1}{2}(q^+)^2\Big(\Gamma *|e^{\lambda t}g(e^{-\lambda t}k)\cos^2\frac{v}{2}+\frac12 e^{-\lambda t}f''(e^{-\lambda t}k)\sin^2\frac{v}{2}|\Big).
\end{align*}
Young's inequality thus yields
\begin{equation}\label{est of partial_xiP}
\|\partial_\xi \w_{x}\|_{L^{2}}\leq Cq^{+}(1+ q^+\|\Gamma\|_{L^{1}})\left(\|k\|_{L^{2}}+e^{-\lambda t}\|v\|_{L^{2}}\right)<\infty.
\end{equation}
The estimates for $\w, \partial_\xi\w$ are similar to \eqref{est of P} and \eqref{est of partial_xiP}, so we omit the details here.\par

Next, to establish the Lipschitz continuity of \eqref{Lip Continuous}, it suffices to show that their partial derivatives
\begin{equation}\label{eq;operators}
\frac{\partial \w}{\partial k},\quad\frac{\partial \w}{\partial v},\quad\frac{\partial \w}{\partial q},\quad\frac{\partial \w_x}{\partial k},\quad\frac{\partial \w_x}{\partial v},\quad\frac{\partial \w_x}{\partial q},
\end{equation}
are uniformly bounded for $(k,v,q)\in\Omega$. For $\frac{\partial \w_x}{\partial k}$, given $(k,v,q)\in\Omega$, we define linear operators $\frac{\partial \w_x}{\partial k}, \frac{\partial (\partial_\xi \w_x)}{\partial k}$  by
\begin{align*}
&\left(\frac{\partial \w_{x}(k,v,q)}{\partial k}\cdot\phi\right)(\xi)\notag\\ &=\frac12\left(\int_{\xi}^{\infty}-\int_{-\infty}^{\xi}\right)e^{-\left|\int_{\xi}^{\xi^{\prime}}\cos^{2}\frac{v(s)}{2}\cdot q(s)\,\mathrm{d}s\right|} \Big(g'(e^{-\lambda t}k)\cos^2\frac{v}{2}+\frac12 e^{-2\lambda t}f'''(e^{-\lambda t}k)\sin^2\frac{v}{2}\Big)(\xi') q(\xi^{\prime})\phi(\xi^{\prime})\,\mathrm{d}\xi^{\prime}
\end{align*}
and
\begin{align*}
\left(\frac{\partial (\partial_\xi\w_{x})(k,v,q)}{\partial k}\cdot\phi\right)(\xi)&=-\left(g'(e^{-\lambda t}k)\cos^2\frac{v}{2}+\frac12 e^{-2\lambda t}f'''(e^{-\lambda t}k)\sin^2\frac{v}{2}\right)(\xi)\cdot q(\xi)\phi(\xi)\notag\\
&~~~+\frac{1}{2}\left(\int_{\xi}^{\infty}-\int_{-\infty}^{\xi}\right)e^{-\left|\int_{\xi}^{\xi^{\prime}}\cos^{2}\frac{v(s)}{2}q(s)\,\mathrm{d}s\right|} \cos^2\frac{v(\xi)}{2}q(\xi)\mathrm{sign}(\xi^{\prime}-\xi)\notag\\
&~~~\cdot\left( g'(e^{-\lambda t}k)\cos^2\frac{v}{2}+\frac12 e^{-2\lambda t}f'''(e^{-\lambda t}k)\sin^2\frac{v}{2}\right)(\xi')\cdot q\left(\xi^{\prime}\right)\phi(\xi^{\prime})\,\mathrm{d}\xi^{\prime},
\end{align*}
with $\phi\in H^1(\R)$. By virtue of estimates $|g'(e^{-\lambda t}k)|\leq C(\|k\|_{H^1}) $ and $|f'''(e^{-\lambda t}k)|\leq C(\|k\|_{H^1})$ together with Young's inequality, we deduce that
\begin{align}\label{eq;(4.9)}
\|\frac{\partial \w_{x}}{\partial k}\cdot\phi\|_{L^2}&\leq Cq^+\|\Gamma\|_{L^1}\Big(\|g'(e^{-\lambda t}k)\|_{L^\infty}+\|f'''(e^{-\lambda t}k)\|_{L^\infty}\Big)\|\phi\|_{L^2}\notag\\
&\leq Cq^+\|\Gamma\|_{L^1}\|\phi\|_{H^1}
\end{align}
and
\begin{align}\label{eq;(4.10)}
\|\frac{\partial (\partial_\xi\w_{x})}{\partial k}\cdot\phi\|_{L^2}&\leq Cq^+(1+q^+\|\Gamma\|_{L^1})\|\phi\|_{H^1}.
\end{align}
From \eqref{eq;Gamma_L^1}, \eqref{eq;(4.9)} and \eqref{eq;(4.10)}, it follows that
$\frac{\partial \w_x}{\partial k}$ is a bounded linear operator from $H^1(\R)$ to $H^1(\R)$.
 	For $\frac{\partial \w_x}{\partial q}$, given $(k,v,q)\in\Omega$, we define linear operators $\frac{\partial \w_x}{\partial q}, \frac{\partial (\partial_\xi \w_x)}{\partial q}$  by
	\begin{align*}
	\left(\frac{\partial \w_{x}(k,v,q)}{\partial q}\cdot\phi\right)(\xi)&=\frac12\left(\int_{\xi}^{\infty}-\int_{-\infty}^{\xi}\right)e^{-\left|\int_{\xi}^{\xi^{\prime}}\cos^{2}\frac{v(s)}{2}\cdot q(s)\,\mathrm{d}s\right|} \Big(-\Big|\int_{\xi}^{\xi^{\prime}}\cos^{2}\frac{v(s)}{2} ds\Big|\cdot q(\xi')+1\Big) \notag\\
	&~~~~\cdot\Big(e^{\lambda t}g(e^{-\lambda t}k)\cos^2\frac{v}{2}+\frac12 e^{-\lambda t}f''(e^{-\lambda t}k)\sin^2\frac{v}{2}\Big)(\xi')\phi(\xi^{\prime})\,\mathrm{d}\xi^{\prime}
	\end{align*}
    and
	\begin{align*}
	&\left(\frac{\partial (\partial_\xi\w_{x})(k,v,q)}{\partial q}\cdot\phi\right)(\xi)\notag\\
	&=-\left(e^{\lambda t}g(e^{-\lambda t}k)\cos^2\frac{v}{2}+\frac12 e^{-\lambda t}f''(e^{-\lambda t}k)\sin^2\frac{v}{2}\right)(\xi)\cdot \phi(\xi)\notag\\
	&~~~+\frac{1}{2}\left(\int_{\xi}^{\infty}-\int_{-\infty}^{\xi}\right)e^{-\left|\int_{\xi}^{\xi^{\prime}}\cos^{2}\frac{v(s)}{2}q(s)\,\mathrm{d}s\right|} \Big(-\Big|\int_{\xi}^{\xi^{\prime}}\cos^{2}\frac{v(s)}{2} ds\Big|\cdot q(\xi)q(\xi')+q(\xi)+q(\xi')\Big)\notag\\
	&~~~\cdot\cos^2\frac{v(\xi)}{2}\mathrm{sign}(\xi^{\prime}-\xi)\left( e^{\lambda t}g(e^{-\lambda t}k)\cos^2\frac{v}{2}+\frac12 e^{-\lambda t}f''(e^{-\lambda t}k)\sin^2\frac{v}{2}\right)(\xi')\cdot \phi(\xi^{\prime})\,\mathrm{d}\xi^{\prime},
	\end{align*}
	with $\phi\in H^1(\R)$. Applying Young's inequality, using \eqref{eq;g_estimate} and \eqref{eq;f_estimate} together with the embedding $H^1(\R)\hookrightarrow L^\infty(\R)$,  we deduce that
	\begin{align}\label{eq;(4.14)}
	\|\frac{\partial \w_{x}}{\partial q}\cdot\phi\|_{L^2}&\leq C\Big(q^+\|\Gamma|\cdot|\|_{L^1}+\|\Gamma\|_{L^1}\Big)\Big(\|k\|_{L^2}+\|v\|_{L^2}\Big)\|\phi\|_{H^1}
	\end{align}and
	\begin{align}\label{eq;(4.15)}
	\|\frac{\partial (\partial_\xi\w_{x})}{\partial q}\cdot\phi\|_{L^2}&\leq C\Big((q^+)^2\|\Gamma|\cdot|\|_{L^1}+q^+\|\Gamma\|_{L^1}+1\Big)\Big(\|k\|_{L^2}+\|v\|_{L^2}\Big)\|\phi\|_{H^1}.
	\end{align}
	From \eqref{eq;Gamma||_L^1}, \eqref{eq;(4.14)} and \eqref{eq;(4.15)}, it follows that
	$\frac{\partial \w_x}{\partial q}$ is a bounded linear operator from $H^1(\R)$ to $H^1(\R)$. Analogously, the other operators in \eqref{eq;operators} can be shown to be bounded linear operators from $H^1(\R)$ to $H^1(\R)$ by following the proof for $\frac{\partial \w_x}{\partial k}$ and $\frac{\partial \w_x}{\partial q}$.\par
	We have therefore shown that the right-hand side of \eqref{eq;semilinear system} is locally Lipschitz continuous near the initial data in $X$. Consequently, by the standard theory of ordinary defferential equations  in Banach spaces, the system \eqref{eq;semilinear system}–\eqref{eq;initial data of semilinear system} admits a unique local solution on some interval $[0, T]$ with $T > 0$.\\
	
	\textbf{Step 2: Extension to a global solution}\par
	To extend the local solution from Step 1 to a global solution, it suffices to show that the following quantity
	\begin{equation}\label{eq;uniformly bounded}
	\|q\|_{L^\infty}+\|\frac{1}{q}\|_{L^\infty}+\|v\|_{L^2}+\|v\|_{L^\infty}+\|k\|_{H^1}
	\end{equation}
	 remains uniformly bounded on any finite time interval. Before proving the uniform boundedness of \eqref{eq;uniformly bounded}, we first claim that
	 \begin{equation}\label{eq;claim}
	 k_\xi=\frac12 q\sin v \quad \text{and}\quad \frac{\mathrm{d}}{\mathrm{d}t}\int_{\mathbb{R}}(k^2\cos^2 \frac{v}{2}+\sin^2\frac{v}{2})q\,\mathrm{d}\xi=0.
	 \end{equation}
	 Indeed, \eqref{p(t,xi)}, \eqref{p_x(t,xi)} and \eqref{eq;semilinear system} yield
	 \begin{align}\label{eq;k_xit}
	 k_{\xi t}&=-\w_{x\xi}\notag\\
	 &=\left( e^{\lambda t}g(e^{-\lambda t}k)\cos^2\frac{v}{2}+\frac12 e^{-\lambda t}f''(e^{-\lambda t}k)\sin^2\frac{v}{2}\right)(\xi)\cdot q(\xi)\notag\\
	 &~~~-\frac{1}{2}\int_{-\infty}^{+\infty}e^{-\left|\int_{\xi}^{\xi^{\prime}}\cos^{2}\frac{v(s)}{2}q(s)\,\mathrm{d}s\right|} \cos^2\frac{v(\xi)}{2}q(\xi)\notag\\
	 &~~~\cdot\left( e^{\lambda t}g(e^{-\lambda t}k)cos^2\frac{v}{2}+\frac12 e^{-\lambda t}f''(e^{-\lambda t}k)\sin^2\frac{v}{2}\right)(\xi')\cdot q\left(\xi^{\prime}\right)\,\mathrm{d}\xi^{\prime}\notag\\
	 &=\Big( e^{\lambda t}g(e^{-\lambda t}k)\cos^2\frac{v}{2}+\frac12 e^{-\lambda t}f''(e^{-\lambda t}k)\sin^2\frac{v}{2}-\w\cos^2\frac{v}{2}\Big)q
	 \end{align}
    and
	\begin{align}\label{eq;1/2 qsinv}
	(\frac12 q\sin v)_t&=\frac12 q_t\sin v+\frac12 qv_t\cos v\notag\\
	&=\frac14 \sin^2 v\cdot q\Big((f''(e^{-\lambda t}k)e^{-\lambda t}+2e^{\lambda t}g(e^{-\lambda t}k)-2\w \Big)\notag\\
	&~~~~+(e^{\lambda t}g(e^{-\lambda t}k)-\w)\cos^2\frac{v}{2}\cdot\cos v\cdot q-\frac12 e^{-\lambda t}f''(e^{-\lambda t}k)\sin^2\frac{v}{2}\cos v\cdot q\notag\\
	&=\Big( e^{\lambda t}g(e^{-\lambda t}k)\cos^2\frac{v}{2}+\frac12 e^{-\lambda t}f''(e^{-\lambda t}k)\sin^2\frac{v}{2}-\w\cos^2\frac{v}{2}\Big)q.
	\end{align}
	From \eqref{eq;bar y xi} and \eqref{eq;initial data of semilinear system}, we observe that
	\begin{equation}\label{eq;t=0}
	k_\xi|_{t=0}=\frac{\bar{u}_x(\bar{y}(\xi))}{1+\bar{u}^2_x(\bar{y}(\xi))}=(\frac 12 q\sin v)|_{t=0}.
	\end{equation}
	Combining \eqref{eq;k_xit}, \eqref{eq;1/2 qsinv} and \eqref{eq;t=0}, we infer that 
	\begin{equation}\label{eq;k_xi=1/2qsinv}
	k_\xi=\frac12 q\sin v
	\end{equation}
	 holds for all $t\geq 0$ as long as the solution exists. Subsequently, from \eqref{eq;semilinear system} and in view of the fact that
	$$(k\w)_\xi=k_\xi\w+k\w_\xi=\frac12 \sin v\cdot q\w+k\cos^2 \frac{v}{2}\cdot q\w_x, $$ 
	we have
	\begin{align}\label{eq;4.22}
	&\frac{\mathrm{d}}{\mathrm{d}t}\int_{\mathbb{R}}(k^2\cos^2 \frac{v}{2}+\sin^2\frac{v}{2})q\,\mathrm{d}\xi \notag\\&=\int_{\R}\Big[ (2kk_t\cos^2 \frac{v}{2}-k^2\cos \frac{v}{2}\sin \frac{v}{2} \cdot v_t+\sin \frac{v}{2}\cos \frac{v}{2}\cdot v_t)\cdot q+(k^2\cos^2 \frac{v}{2}+\sin^2\frac{v}{2})\cdot q_t\Big]\,\mathrm{d}\xi\notag\\
	&=\int_{\R}\Big[-2k\w_x\cos^2 \frac{v}{2}\cdot q-\frac12 k^2\sin v\cdot q\Big(2e^{\lambda t}g(e^{-\lambda t}k)\cos^2\frac{v}{2}-2\w\cos^2\frac{v}{2}-e^{-\lambda t}f''(e^{-\lambda t}k)\sin^2\frac{v}{2}\Big)\notag\\
	&~~~~+\frac12\sin v\cdot q\Big(2e^{\lambda t}g(e^{-\lambda t}k)\cos^2\frac{v}{2}-2\w\cos^2\frac{v}{2}-e^{-\lambda t}f''(e^{-\lambda t}k)\sin^2\frac{v}{2}\Big)\notag\\
	&~~~~+(k^2\cos^2 \frac{v}{2}+\sin^2\frac{v}{2})\cdot \frac12 \sin v\cdot q\Big((f''(e^{-\lambda t}k)e^{-\lambda t}+2e^{\lambda t}g(e^{-\lambda t}k)-2\w \Big) \Big]\,\mathrm{d}\xi\notag\\
	&=\int_{\R}\Big[-2k\w_x\cos^2 \frac{v}{2}\cdot q+\frac12\sin v \cdot k^2qe^{-\lambda t}f''(e^{-\lambda t}k)+\sin v\cdot qe^{\lambda t}g(e^{-\lambda t}k)-\sin v\cdot q\w  \Big]\,\mathrm{d}\xi\notag\\
	&=\int_{\R}\Big[-2(k\w)_\xi+\frac12 k^2e^{-\lambda t}f''(e^{-\lambda t}k)\sin v \cdot q+e^{\lambda t}g(e^{-\lambda t}k)\sin v\cdot q  \Big]\,\mathrm{d}\xi\notag\\
	&=\int_{\R}\Big[-2(k\w)_\xi+k^2e^{-\lambda t}f''(e^{-\lambda t}k)k_\xi+2e^{\lambda t}g(e^{-\lambda t}k)k_\xi  \Big]\,\mathrm{d}\xi.
	\end{align}
	Defining 
	\begin{equation*}
	F(k)=\int_0^{e^{-\lambda t}k} f(s)\,\mathrm{d}s\quad\text{and}\quad G(k)=\int_0^{e^{-\lambda t}k} g(s)\,\mathrm{d}s,
	\end{equation*}
	we then obtain
	\begin{align}\label{eq;F(k)}
	k^2e^{-\lambda t}f''(e^{-\lambda t}k)k_\xi
	&=(k^2f'(e^{-\lambda t}k))_\xi-2kf'(e^{-\lambda t}k)k_\xi\notag\\
	&=(k^2f'(e^{-\lambda t}k))_\xi-2e^{\lambda t}\Big((kf(e^{-\lambda t}k))_\xi-k_\xi f(e^{-\lambda t}k)\Big)\notag\\
	&=(k^2f'(e^{-\lambda t}k))_\xi-2e^{\lambda t}(kf(e^{-\lambda t}k))_\xi+2e^{2\lambda t}(F(k))_\xi
	\end{align}
	and
	\begin{equation}\label{eq;G(k)}
	2e^{\lambda t}g(e^{-\lambda t}k)k_\xi=2e^{2\lambda t}(G(k))_\xi.
	\end{equation}
	Hence, by virtue of \eqref{eq;4.22}, \eqref{eq;F(k)} and \eqref{eq;G(k)}, we have
	\begin{align*}
	&\frac{\mathrm{d}}{\mathrm{d}t}\int_{\mathbb{R}}(k^2\cos^2 \frac{v}{2}+\sin^2\frac{v}{2})q\,\mathrm{d}\xi\notag\\ &=\int_{\mathbb{R}}\Big[ -2(k\w)_\xi+ (k^2f'(e^{-\lambda t}k))_\xi-2e^{\lambda t}(kf(e^{-\lambda t}k))_\xi+2e^{2\lambda t}(F(k))_\xi +2e^{2\lambda t}(G(k))_\xi \Big]\,\mathrm{d}\xi\notag\\
	&=0,
	\end{align*}
	which together with \eqref{eq;k_xi=1/2qsinv} establishes the claim \eqref{eq;claim}.\par
	We can now express the total energy in the new variables, which by \eqref{eq;claim} is conserved in time, namely,
	\begin{equation}\label{eq;E(t)}
	E(t)=\int_{\mathbb{R}}\left(k^2(t,\xi)\cos^2\frac{v(t,\xi)}{2}+\sin^2\frac{v(t,\xi)}{2}\right)q(t,\xi)\,\mathrm{d}\xi=E(0)\triangleq E_0
	\end{equation}
	along any solution of \eqref{eq;semilinear system}-\eqref{eq;initial data of semilinear system}. Then, \eqref{eq;claim} and \eqref{eq;E(t)} yield that
	\begin{align}\label{eq;k_L infty}
	 \sup_{\xi\in\R}|k^2(t,\xi)|&= \sup_{\xi\in\R}|\int_{-\infty}^\xi kk_\xi\,\mathrm{d}\xi-\int_\xi^{+\infty}kk_\xi \,\mathrm{d}\xi|\notag\\
	 &\leq 2\int_{\R}|kk_\xi |\,\mathrm{d}\xi\notag\\
	 &\leq 2\int_{\R}|k||\sin \frac{v}{2}\cos \frac{v}{2}|q\,\mathrm{d}\xi\notag\\
	 &\leq \int_{\R}(k^2\cos^2 \frac{v}{2}+\sin^2\frac{v}{2})q\,\mathrm{d}\xi =E_0,
	\end{align}
	which means $\|k(t)\|_{L^\infty}\leq E_0^{\frac12}$.\par
	In what follows, we estimate $\|\w\|_{L^\infty}$ and $\|\w_x\|_{L^\infty}$. However, \eqref{eq;E(t)} does not directly yield $L^\infty$ boundedness of $\|\w\|_{L^\infty}$ and $\|\w_x\|_{L^\infty}$. To overcome this, we use variable transformations and a contradiction argument. Define the flow $y(t,\xi)$ by
	\begin{equation}\label{eq;flow_def}
		\frac{\partial}{\partial t}y(t,\xi)=f'(e^{-\lambda t}k(t,y(t,\xi))),\quad y(0,\xi)=\bar{y}(\xi).
	\end{equation}
	From the definition of $\bar{y}(\xi)$, we know that $\bar{y}(\xi)\in L^\infty_{loc}$ is strictly monotonous and 
	$$ |\bar{y}(\xi_1)-\bar{y}(\xi_2)|=\left|\int_{\bar{y}(\xi_2)}^{\bar{y}(\xi_1)}1\,\mathrm{d}x\right|\leq\left|\int_{\bar{y}(\xi_2)}^{\bar{y}(\xi_1)}(1+\bar{u}_x^2)\,\mathrm{d}x\right|\leq|\xi_1-\xi_2| .$$
	It then follows that $\bar{y}$ is locally absolutely continuous. From \eqref{eq;flow_def}, there exists $T\in(0,\infty)$ such that $y(t,\xi)\in L^\infty_{loc}$ for $t\in[0,T)$, which implies $y(t,\xi)$ is locally absolutely continuous for $t\in[0,T)$. We now claim that
	\begin{equation}\label{eq;y_xi=qcos^2v/2}
	y_\xi(t,\xi)=q(t,\xi)\cos^2 \frac{v(t,\xi)}{2},\quad \text{for~all~} t\in[0,T). 
	\end{equation}
	 Indeed, from \eqref{eq;semilinear system}, \eqref{eq;k_xi=1/2qsinv} and \eqref{eq;flow_def}, we have
	 \begin{equation}\label{eq;y_{txi}}
	 y_{t\xi}=\partial_\xi f'(e^{-\lambda t}k)= e^{-\lambda t}f''(e^{-\lambda t}k)k_\xi=\frac12 e^{-\lambda t}\sin v\cdot qf''(e^{-\lambda t}k)
	 \end{equation}
	 and
	 \begin{align}\label{eq;qcos^2v/2_t}
	 (q\cos^2\frac{v}{2})_t&=q_t\cos^2\frac{v}{2}-qv_t\cos\frac{v}{2}\sin\frac{v}{2}\notag\\
	 &=\frac12 \sin v\cdot q\Big((f''(e^{-\lambda t}k)e^{-\lambda t}+2e^{\lambda t}g(e^{-\lambda t}k)-2\w \Big)\cos^2\frac{v}{2}\notag\\
	 &~~~~-\frac12 \sin v\cdot q\Big( 2e^{\lambda t}g(e^{-\lambda t}k)\cos^2\frac{v}{2}-2\w\cos^2\frac{v}{2}-e^{-\lambda t}f''(e^{-\lambda t}k)\sin^2\frac{v}{2}\Big)\notag\\
	 &=\frac12 \sin v\cdot qe^{-\lambda t}f''(e^{-\lambda t}k).
	 \end{align}
	 From \eqref{eq;bar y xi} and \eqref{eq;initial data of semilinear system},we observe that
	 \begin{equation}\label{eq;y t=0}
	 y_\xi|_{t=0}=\bar{y}_\xi(\xi)=\frac{1}{1+\bar{u}^2_x(\bar{y}(\xi))},\quad(q\cos^2\frac{v}{2})|_{t=0}=\cos^2\frac{v(0,\xi)}{2}=\frac{1}{1+\bar{u}^2_x(\bar{y}(\xi))}.
	 \end{equation}
	 Combining \eqref{eq;y_{txi}}, \eqref{eq;qcos^2v/2_t} and \eqref{eq;y t=0} yields that the claim \eqref{eq;y_xi=qcos^2v/2} holds. Then, for $t\in[0,T),~[a,b]\subset \R$, 
	 \begin{align}\label{eq;4.33}
	 \|\frac{1}{2}\int_{a}^{b}e^{-|\int_{\xi}^{\xi'}\cos^{2}\frac{v}{2}qds|}e^{\lambda t}g(e^{-\lambda t}k)\cos^{2}\frac{v}{2}q\,\mathrm{d}\xi^{\prime}\|_{L^{\infty}}
	 &\leq C\|k\|_{L^{\infty}}\|\int_{a}^{b}e^{-|y(\xi')-y(\xi)|}y_{\xi}(\xi')d\xi^{\prime}\|_{L^{\infty}}\notag\\
	 &\leq C\|k\|_{L^\infty}\int_{-\infty}^{+\infty}e^{-|s|}\,\mathrm{d}s\notag\\
	 &\leq C\|k\|_{L^\infty}.
	 \end{align}
	 According to the monotone convergence theorem, we deduce
	 that there exists a limit of $$\|\frac{1}{2}\int_{a}^{b}e^{-|\int_{\xi}^{\xi'}\cos^{2}\frac{v}{2}q\,\mathrm{d}s|}e^{\lambda t}g(e^{-\lambda t}k)\cos^{2}\frac{v}{2}q\,\mathrm{d}\xi^{\prime}\|_{L^{\infty}}$$
	  as $a\rightarrow -\infty$ and $b\rightarrow +\infty$. Consequently, from \eqref{p(t,xi)}, \eqref{eq;k_L infty} and \eqref{eq;4.33}, we obtain
	  \begin{align} \label{est;P_L infty}
	  \|\w(t)\|_{L^\infty}&=\|\frac12 \int^{+\infty}_{-\infty}e^{-|\int^{\xi'}_{\xi}\cos^2\frac{v(s)}{2}q(s)\,\mathrm{d}s|}\Big(e^{\lambda t}g(e^{-\lambda t}k)\cos^2\frac{v}{2}+\frac12 e^{-\lambda t}f''(e^{-\lambda t}k)\sin^2\frac{v}{2}\Big)q\,\mathrm{d}\xi' \|_{L^\infty}\notag\\
	  &\leq C\|\frac12 \int^{+\infty}_{-\infty}e^{-|\int^{\xi'}_{\xi}\cos^2\frac{v(s)}{2}q(s)\,\mathrm{d}s|}e^{\lambda t}g(e^{-\lambda t}k)\cos^2\frac{v}{2}q\,\mathrm{d}\xi'\|_{L^\infty}+\int^{+\infty}_{-\infty}\sin^2\frac{v}{2}q\,\mathrm{d}\xi' \notag\\
	  &\leq C(E_0^{\frac12}+E_0).
	  \end{align}
	  By the same argument, we can prove
	  \begin{equation}\label{est;P_x L infty}
	  \|\w_x(t)\|_{L^\infty}\leq C(E_0^{\frac12}+E_0).
	  \end{equation}
	  
	 In what follows, we establish the estimates of $\|q\|_{L^\infty}$ and $\|\frac1q\|_{L^\infty}$
	 over an arbitrary bounded time interval. From the system \eqref{eq;semilinear system}-\eqref{eq;initial data of semilinear system}, using \eqref{eq;k_L infty} and \eqref{est;P_L infty}, we deduce that
	 $$ |q_t|\leq C (1+E_0^{\frac12}+E_0)q,$$ 
	 from which it follows that
	 \begin{equation}\label{est;q}
	 e^{-C(1+E_0^{\frac12}+E_0)t}\leq q(t)\leq  e^{C(1+E_0^{\frac12}+E_0)t}.
	 \end{equation}
	 Meanwhile, the definition of $y(t,\xi)$ yields
	 \begin{equation}\label{est;y bounds}
	 \bar{y}(\xi)-Ct\leq y(t,\xi)\leq \bar{y}(\xi)+Ct. 
	 \end{equation}
	 Furthermore, by \eqref{eq;y_xi=qcos^2v/2} and \eqref{est;q}, we have 
	 $$\|y_\xi(t)\|_{L^\infty}\leq \|q(t)\|_{L^\infty}\leq e^{C(1+E_0^{\frac12}+E_0)t}.$$  Therefore, we get $y(t,\xi)\in H^1_{loc}$ for $t\in[0,T]$. Arguing by contradiction, we show that the time $T$ cannot be finite. Therefore, the results above hold for all time $t$ within the solution's lifespan.
	 
	 By analogy with the estimate for $\|q(t)\|_{L^\infty}$
	 , from \eqref{eq;semilinear system} and \eqref{eq;initial data of semilinear system}, we can derive the corresponding estimates for $v(t,\xi)$:
	 \begin{equation}\label{est;v}
	 \|v(t)\|_{L^\infty}\leq C(1+E_0^{\frac12}+E_0)t+\|2\arctan \bar{u}_x(\bar{y}(\xi))\|_{L^\infty}.
	 \end{equation}
	 
	 Next, we turn to establishing the estimates for $\|v(t)\|_{L^2}$.  Multiplying the second equation in \eqref{eq;semilinear system} by $2v$ and  integrating over $\xi$, we obtain that
	 \begin{align*}
	 \frac{\mathrm{d}}{\mathrm{d}t}\|v(t)\|_{L^2}^2&=2\int_{\R}v(t,\xi)\Big(2(e^{\lambda t}g(e^{-\lambda t}k)-\w)\cos^2\frac{v}{2}-e^{-\lambda t}f''(e^{-\lambda t}k)\sin^2\frac{v}{2}\Big)\,\mathrm{d}\xi \\
	 &\leq C\|v(t)\|_{L^2}\Big(\|k(t)\cos\frac{v(t)}{2}\|_{L^2}+\|\w(t)\|_{L^2}+\|v(t)\|_{L^2}\Big), \\
	 &\leq C\|v(t)\|_{L^2}\Big(E_0^{\frac12}+\|\w(t)\|_{L^2}+\|v(t)\|_{L^2}\Big),
	 \end{align*}
	 from which it follows that, for $t\leq T$,
	 $$ \|v(t)\|_{L^2}\leq C\int^t_0 \Big(E_0^{\frac12}+\|\w(\tau)\|_{L^2}+\|v(\tau)\|_{L^2}\Big) d\tau+\|v_0\|_{L^2}.$$
	 The Gronwall inequality thus implies that
	 \begin{equation}\label{est;v_L^2}
	 \|v(t)\|_{L^2}\leq e^{CT}\Big( \|v_0\|_{L^2}+CTE_0^{\frac12}+C\int^T_0\|\w(t)\|_{L^2}\,\mathrm{d}t\Big).
	 \end{equation}
	Observing the above identity, we still need to find a bound for the $L^2$ norm of $\w(t)$ to establish the estimate for 
	$\|v(t)\|_{L^2}$. Toward this goal, we proceed as in \textbf{Step~1}. Letting $C_q=e^{C(1+E_0^{\frac12}+E_0)t}$
	,from \eqref{est;q}, we have $C_q^{-1}\leq q(t)\leq C_q$.
	 We deduce that
	 \begin{align*}
	 \text{measure}\left\{\xi\in\mathbb{R}\Big| |\frac{v(\xi)}2|\geq\frac\pi4\right\} &\leq 18\int_{\left\{\xi\in\mathbb{R}|\sin^2\frac{v(\xi)}{2}\geq\frac{1}{18}\right\}}\sin^2\frac{v(\xi)}{2}\,\mathrm{d}\xi\notag\\
	 &\leq 18\int_{\R}\sin^2\frac{v(\xi)}{2}\cdot q\cdot C_q\,\mathrm{d}\xi\notag\\
	 &\leq 18E_0 C_q,
	 \end{align*}
	 which implies that, for any $\xi_1<\xi_2$, 
	 \begin{align}\label{eq;cos^2 v/2 q}
	 \int_{\xi_{1}}^{\xi_{2}}\cos^{2}\frac{v(\xi)}{2}\cdot q(\xi)\,\mathrm{d}\xi
	 &\geq\int_{\left\{\xi\in[\xi_{1},\xi_{2}]||\frac{v(\xi)}{2}|\leq\frac{\pi}{4}\right\}}\cos^2\frac{v(\xi)}{2}q(\xi)\,\mathrm{d}\xi\notag\\
	 &\geq\int_{\left\{\xi\in[\xi_{1},\xi_{2}]||\frac{v(\xi)}{2}|\leq\frac{\pi}{4}\right\}}\frac12 C_q^{-1}\,\mathrm{d}\xi\notag\\
	 &\geq \frac{\xi_{2}-\xi_{1}}{2} C_q^{-1}-9E_0.
	 \end{align}
	 Defining 
	 $$ \Gamma(\zeta)\triangleq\min\left\{1,\exp\left(9E_0-\frac{|\zeta|}{2}C_q^{-1}\right)\right\}, $$
	 we have:
	 \begin{equation} \label{eq;4.36}
	 \|\Gamma\|_{L^1}=\left(\int_{|\zeta|\leq 18E_0C_q}+\int_{|\zeta|\geq18E_0C_q}\right)\Gamma(\zeta)\,\mathrm{d}\zeta=36E_0C_q+4C_q<\infty
	 \end{equation}and
	 \begin{equation}\label{eq;4.37}
	 \|\Gamma\|_{L^2}^2=\left(\int_{|\zeta|18E_0C_q}+\int_{|\zeta|\geq18E_0C_q}\right)(\Gamma(\zeta))^2\,\mathrm{d}\zeta=36E_0C_q+2C_q<\infty.
	 \end{equation}
	 Applying Young's inequality together with \eqref{eq;cos^2 v/2 q}, \eqref{eq;4.36} and \eqref{eq;4.37}, we obtain 
	 \begin{align}\label{eq;4.40}
	 \|\w(t)\|_{L^2}&= \|\frac12 \int^{+\infty}_{-\infty}e^{-|\int^{\xi'}_{\xi}\cos^2\frac{v(s)}{2}q(s)ds|}\Big(e^{\lambda t}g(e^{-\lambda t}k)\cos^2\frac{v}{2}+\frac12 e^{-\lambda t}f''(e^{-\lambda t}k)\sin^2\frac{v}{2}\Big)q\,\mathrm{d}\xi'\|_{L^2} \\
	 &\leq C\Big( \|\Gamma\|_{L^{1}}\|k\cos\frac{v}{2}\cdot q^{\frac12}\|_{L^2}\cdot C_q^{\frac12}+\|\Gamma\|_{L^{2}}\|\sin^2\frac{v}{2}\cdot q\|_{L^{1}}\Big) \notag\\
	 &\leq C\Big( \|\Gamma\|_{L^{1}}E_0^{\frac12} C_q^{\frac12}+\|\Gamma\|_{L^{2}}E_0\Big)<\infty.
	 \end{align}
	 Plugging \eqref{eq;4.40} into \eqref{est;v_L^2}, we can see that $\|v\|_{L^2}$ remains bounded on any bounded time interval. \par
	 Combining \eqref{est;q}, \eqref{est;v} and \eqref{est;v_L^2}, and comparing with equation \eqref{eq;uniformly bounded}, we are finally reduced to controlling $\|k\|_{H^1}$ on bounded time intervals. Multiplying the first equation in \eqref{eq;semilinear system} by $2k$ and  integrating over $\xi$, we obtain that
	 \begin{align*}
	 \frac{\mathrm{d}}{\mathrm{d}t}\|k(t)\|_{L^2}^2=-2\int_{\R}k(t,\xi)\cdot \w_x(t,\xi)d\xi \leq 2\|k(t)\|_{L^2}\|\w_x(t)\|_{L^2},
	 \end{align*}
	 where, analogous to the estimate for $\|\w(t)\|_{L^2}$, 
	 \begin{align*}
	 &~\|\w_x(t)\|_{L^2}\\
	 &=\|\frac12 \Big(\int^{+\infty}_{\xi}-\int^{\xi}_{-\infty}\Big) e^{-|\int^{\xi'}_{\xi}\cos^2\frac{v(s)}{2}q(s)\,\mathrm{d}s|}\Big(e^{\lambda t}g(e^{-\lambda t}k)\cos^2\frac{v}{2}+\frac12 e^{-\lambda t}f''(e^{-\lambda t}k)\sin^2\frac{v}{2}\Big)q\,\mathrm{d}\xi' \|_{L^2}\\
	 &\leq C\Big(C_q\|\Gamma\|_{L^1}\|k\|_{L^2}+\|\Gamma\|_{L^2}\|\sin^2\frac{v}{2}\cdot q\|_{L^1}\Big)\\
	 &\leq C(C_q\|\Gamma\|_{L^1}\|k\|_{L^2}+\|\Gamma\|_{L^2}E_0).
	 \end{align*}
	 It follows that 
	 $$\frac{\mathrm{d}}{\mathrm{d}t}\|k(t)\|_{L^2}\leq C(E_0,C_q)(\|k\|_{L^2}+1), $$
	 which implies 
	 $$\|k(t)\|_{L^2}=\|\bar{u}(\bar{y}(\xi))\|_{L^2}+C\int^t_0(\|k(\tau)\|_{L^2}+1)\,\mathrm{d}\tau. $$ 
	 Applying Gronwall's inequality yields
	 \begin{align}\label{est;k_L^2}
	 \|k(t)\|_{L^2}=e^{CT}(\|\bar{u}(\bar{y}(\xi))\|_{L^2}+CT).
	 \end{align}
	 Thanks to \eqref{eq;k_xi=1/2qsinv} and \eqref{est;k_L^2}, it easily shows $\|k\|_{H^1}$ remains bounded on bounded intervals of time. This completes the proof of the theorem. 
	 
\end{proof}
We next state a lemma to be used in the subsequent section. The proof follows the argument in \cite{Constantin2007}, with the extra condition $f''\neq 0$. We omit the details.
\begin{lemm}\label{lemma4.2}
	Define
	\begin{equation*}
	\mathcal{N}\triangleq\left\{t\geq 0~|~\mathrm{measure}\{\xi\in \R~|~v(t,\xi)=-\pi\}>0\right\}.
	\end{equation*}
	Then, $\text{measure}(\mathcal{N})=0.$
\end{lemm}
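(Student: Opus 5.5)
The plan follows Bressan--Constantin: we show that for a.e. $t$, the set where $v(t,\cdot)=-\pi$ has measure zero. The tool is the equation for $v$ in \eqref{eq;semilinear system}. At points with $v=-\pi$ we have $\cos^2\frac v2=0$ and $\sin^2\frac v2=1$, so
\[
v_t=-e^{-\lambda t}f''(e^{-\lambda t}k)\neq 0 .
\]
Since $f''\neq0$ is continuous, it has a fixed sign; assume $f''>0$ (the other case is symmetric). Because $\|k\|_{L^\infty}\le E_0^{1/2}$ by \eqref{eq;k_L infty}, on any bounded interval $[0,T]$ we get a uniform lower bound $f''(e^{-\lambda t}k)\ge c_T>0$. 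Hence $v_t\le -e^{-\lambda T}c_T<0$ whenever $v=-\pi$.

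\textbf{Step 1: transversal crossing.} By continuity of the right-hand side in $v$, together with the $L^\infty$ bounds on $k$, $\w$ and $g$ from \eqref{est;P_L infty}, there is $\delta>0$ such that
\[
v_t\le -\tfrac12 e^{-\lambda T}c_T \quad\text{whenever } |v+\pi|\le\delta .
\]
Fix $\xi$. Then $t\mapsto v(t,\xi)$ is Lipschitz and crosses the level $-\pi$ transversally. So the set $\{t\in[0,T]: v(t,\xi)=-\pi\}$ consists of isolated points, and in fact has measure zero.

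\textbf{Step 2: Fubini.} Apply Fubini on $[0,T]\times K$ to the set $A=\{(t,\xi): v(t,\xi)=-\pi\}$, where $K$ is any bounded $\xi$-interval. The set $A$ is measurable because $v$ is measurable in $(t,\xi)$, being a continuous $L^\infty$-valued map in $t$. By Step 1 each $\xi$-section of $A$ has $t$-measure zero, so $\mathrm{meas}(A)=0$. By Fubini again, for a.e. $t$ the $t$-section $\{\xi\in K: v(t,\xi)=-\pi\}$ has measure zero. Letting $K$ and $T$ run through countably many choices gives $\mathrm{meas}(\mathcal N)=0$.

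\textbf{Main obstacle.} The delicate point is that solutions live in $L^\infty$ in $\xi$, so $v(\cdot,\xi)$ is a priori only defined for a.e. $\xi$. This is handled by choosing a representative that is Lipschitz in $t$ for every $\xi$: the ODE holds pointwise a.e. in $\xi$ with a right-hand side bounded uniformly in $(t,\xi)$. A secondary point is that the pointwise Step 1 needs $\w$ bounded pointwise, which \eqref{est;P_L infty} provides.

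As an alternative to Step 1, one can bound the time spent near $-\pi$ directly. For each $\xi$,
\[
\mathrm{meas}\{t\in[0,T]: |v(t,\xi)+\pi|<\epsilon\}\le C\epsilon\cdot\#\{\text{crossings}\}.
\]
The number of crossings is bounded because $v_t$ is bounded while $v$ stays in a bounded range. Integrating in $\xi$ over $K$ and letting $\epsilon\to0$ gives the same conclusion.
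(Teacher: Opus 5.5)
Your proposal is correct and follows the argument the paper invokes, namely the Bressan--Constantin proof: nonvanishing of $f''$, together with the bounds on $k$ and $\widetilde{P}$, gives $v_t$ a fixed sign bounded away from zero in a band $|v+\pi|\le\delta$, so each characteristic meets the level $-\pi$ at most once on $[0,T]$, and Fubini--Tonelli applied to $\{(t,\xi): v(t,\xi)=-\pi\}$ finishes the proof. Your closing ``alternative'' misstates why the number of crossings is bounded, since a bounded $v_t$ and a bounded range do not limit crossings; it is the transversality itself that forces at most one crossing, and that paragraph can simply be dropped.
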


\subsection{Global conservative weak solutions for the original equation} 
In this section, We will construct a global conservative weak solution to the equation \eqref{eq;u1} in the original variables 
$(t,x)$ by making use of the global solution to \eqref{eq;semilinear system}-\eqref{eq;initial data of semilinear system}.

\begin{proof}[Proof of Theorem \ref{Thm 3.3}]
	In view of Theorem \ref{Thm 3.1}, let $(k,v,q)$ be the global solution to the system \eqref{eq;semilinear system}-\eqref{eq;initial data of semilinear system} and let $y(t,\xi)$ be defined by \eqref{eq;flow_def}. Defining 
	$$ k(t,x)=k(t,\xi),\quad\text{if}~y(t,\xi)=x, $$
	we claim that $u(t,x)\triangleq e^{-\lambda t}k(t,x)$ is a  global conservative weak solution to the Cauchy problem \eqref{eq;u1} in the sense of Definition \ref{def2.1}.\par
	From \eqref{bary_def}, we have
	$$ \lim_{\xi\to\pm\infty}\bar{y}(t,\xi)=\pm\infty,$$
	which together with \eqref{est;y bounds} yields the image of the continuous map $(t,\xi)\mapsto(t,y(t,\xi)) $ is the entire half-plane $\R^{+}\times\R$. Next, \eqref{eq;y_xi=qcos^2v/2} yields the map $y\mapsto y(t,\xi)$ is non-decreasing. Moreover, if $\xi_1<\xi_2$, but $y(t,\xi_1) = y(t,\xi_2)$, then
	$$\int_{\xi_1}^{\xi_2}q(t,\zeta)\cos^2\frac{v(t,\zeta)}{2}\,\mathrm{d}\zeta=\int_{\xi_1}^{\xi_2}y_\xi(t,\zeta)\,\mathrm{d}\zeta=0 ,$$
	from which it follows that $\cos\frac{v(t,\xi)}{2}=0$ for all $\xi\in(\xi_1,\xi_2)$. Hence, by virtue of \eqref{eq;k_xi=1/2qsinv} in Theorem \ref{Thm 3.1}, we obtain
	$$u(t,\xi_2)-u(t,\xi_1)=e^{-\lambda t}(k(t,\xi_2)-k(t,\xi_1)) =e^{-\lambda t} \int_{\xi_1}^{\xi_2}\frac{1}{2}q(t,\zeta)\sin v(t,\zeta)\,\mathrm{d}\zeta=0 .$$
	This implies the map $(t,x)\mapsto u(t,x)$ is well-defined for all $t\geq 0$ and $x\in\R$. \par
	In what follows, to prove the claim above, it suffices to verify that $u(t,x)$ is a global conservative weak solution according to Definition \ref{def2.1}. Firstly, by virtue of\eqref{eq;k_xi=1/2qsinv} and \eqref{eq;y_xi=qcos^2v/2}, we have
	\begin{equation}\label{eq;k_x=sinv/1+cosv}
	k_x(t,x)=\frac{\sin v(t,\xi)}{1+\cos v(t,\xi)},\quad\text{if}~x=y(t,\xi)~\text{and}~\cos v(t,\xi)\neq-1.
	\end{equation}
	Subsequently, changing the variables and applying  \eqref{eq;E(t)} and \eqref{eq;y_xi=qcos^2v/2}, we infer that
	\begin{align}\label{eq;5.2}
	E(t)=\int_{\R}(k^{2}(t,x)+k_{x}^{2}(t,x))\,\mathrm{d}x=\int_{\{\xi|\cos v(t,\xi)>-1\}}\Big(k^2(t,\xi)\cos^2\frac{v(t,\xi)}{2}+\sin^2\frac{v(t,\xi)}{2}\Big)q(t,\xi)\,\mathrm{d}\xi\leq E_{0},
	\end{align}
	which implies that $k(t,\cdot)\in H^1(\R)$ and $\|k(t,\cdot)\|_{H^1}\leq E_0^{\frac12}$. We thus prove $u(t,\cdot)\in H^1(\R)$ at each fixed $t\geq 0$.\par
	Secondly, applying Morrey's inequality, we obtain that $k(t,x)$ is uniformly H${\rm\ddot{o}}$lder continuous in $x$ with exponent $\frac12$. 
	Using \eqref{eq;semilinear system} and \eqref{est;P_x L infty}, we infer that the map $t\mapsto k(t,y(t,\xi))$ is uniformly Lipschitz continuous along every characteristic curve $t\mapsto y(t,\xi).$ Therefore, straightforward calculations show that $k(t,x)$ is globally H${\rm\ddot{o}}$lder continuous on the entire half-plane $\R^{+}\times\R$. Next, we prove that the map $t\mapsto u(t,\cdot)$ is Lipschitz continuous from $\R^+$ into $L^2(\mathbb{R})$. Indeed, for a given point $x$ and any interval $[\tau,\tau+h]$, choose $\xi\in\R$ such that the characteristic curve passes through the point $(\tau,x)$, i.e. $y(\tau,\xi)=x.$ Using \eqref{eq;semilinear system} and \eqref{eq;k_L infty}, we have the following estimates: 
	\begin{align}
	&~\left|e^{-\lambda h}k(\tau+h, x)-k(\tau,x)\right|\notag\\
	&\leq\left|e^{-\lambda h}k(\tau+h, x)-e^{-\lambda h}k\big(\tau+h, y(\tau+h,\xi)\big)\right|+\left|e^{-\lambda h}k(\tau+h, y(\tau+h,\xi)\big)-k(\tau,x)\right|\notag\\
	&\leq e^{-\lambda h}\sup_{|y-x|\leq Ch}\left|k(\tau+h, y)-k(\tau+h,x)\right|+e^{\lambda\tau}\int_{\tau}^{\tau+h} \lambda e^{-\lambda t}\left|k(t,\xi)\right|\,\mathrm{d}t+e^{\lambda\tau}\int_{\tau}^{\tau+h} e^{-\lambda t}\left|\w_{x}(t,\xi)\right|\,\mathrm{d}t,
	\end{align}
	where $C$ is a constant depending only on $E_0$. It follows that
	\begin{align*}
	&\int_{\R}\left|u(\tau+h,x)-u(\tau,x)\right|^{2}\,\mathrm{d}x\\
	&=e^{-2\lambda \tau}\int_{\R}|e^{-\lambda h}k(\tau+h, x)-k(\tau,x)|^2\,\mathrm{d}x\\
	&\leq 3\Bigg[ e^{-2\lambda \tau}\int_{\R}e^{-2\lambda h}\sup_{|y-x|\leq Ch}\left|k(\tau+h, y)-k(\tau+h,x)\right|^2\,\mathrm{d}x +\int_\R\left(\int_\tau^{\tau+h}\lambda e^{-\lambda t}\left|k(t,\xi)\right|dt\right)^2\,\mathrm{d}x\\ &~~+\int_\R\left(\int_\tau^{\tau+h}e^{-\lambda t}\left|\w_{x}(t,\xi)\right|dt\right)^2\,\mathrm{d}x\Bigg]\\
	&\leq 3\Bigg[ e^{-2\lambda (\tau+h)}\int_{\R}\left(\int_{x-Ch}^{x+Ch}\left|k_{x}(\tau+h,y)\right|\,\mathrm{d}y\right)^{2}\,\mathrm{d}x +\int_\R\left(\int_\tau^{\tau+h}\lambda e^{-\lambda t}\left|k(t,\xi)\right|\,\mathrm{d}t\right)^2q(\tau,\xi)\cos^2\frac{v(\tau,\xi)}{2}\,\mathrm{d}\xi\\ &~~+\int_\R\left(\int_\tau^{\tau+h}e^{-\lambda t}\left|\w_x(t,\xi)\right|\,\mathrm{d}t\right)^2q(\tau,\xi)\cos^2\frac{v(\tau,\xi)}{2}\,\mathrm{d}\xi\Bigg]\\
	&\leq 3\Bigg[e^{-2\lambda (\tau+h)}\int_\R2Ch\int_{x-Ch}^{x+Ch}\left|k_x(\tau+h,y)\right|^2\,\mathrm{d}y\,\mathrm{d}x +\int_{\R}\lambda^2he^{-2\lambda \tau} \Big(\int_{\tau}^{\tau+h}\left|k(t,\xi)\right|^{2}\,\mathrm{d}t\Big)\|q(\tau)\|_{L^{\infty}}\,\mathrm{d}\xi\\
	&~~+\int_{\R}e^{-2\lambda \tau}h \Big(\int_{\tau}^{\tau+h}\left|\w_x(t,\xi)\right|^{2}\,\mathrm{d}t\Big)\|q(\tau)\|_{L^{\infty}}\,\mathrm{d}\xi\Bigg]\\
	&\leq 3\Bigg[4C^2h^{2}e^{-2\lambda (\tau+h)}\|k_{x}(\tau+h,\cdot)\|_{L^{2}}^{2} +\lambda^2he^{-2\lambda \tau}\|q(\tau)\|_{L^{\infty}}\int_{\tau}^{\tau+h}\|k(t,\cdot)\|_{L^{2}}^{2}\,\mathrm{d}t\\
	&~~+he^{-2\lambda \tau}\|q(\tau)\|_{L^{\infty}}\int_{\tau}^{\tau+h}\|\w_x(t,\cdot)\|_{L^{2}}^{2}\,\mathrm{d}t\Bigg]\\
	&\leq Ch^{2},
	\end{align*}
	which clearly implies the  Lipschitz continuity of the map $t\mapsto u(t,\cdot)$.\par 
	Thirdly, according to Lemma \ref{lemma4.2} and \eqref{eq;5.2}, we deduce that for $t\notin \mathcal{N}$,
	$$E(t)=\int_{\R}(k^{2}(t,x)+k_{x}^{2}(t,x))\,\mathrm{d}x=\int_{\{\xi|\cos v(t,\xi)>-1\}}\Big(k^2(t,\xi)\cos^2\frac{v(t,\xi)}{2}+\sin^2\frac{v(t,\xi)}{2}\Big)q(t,\xi)\,\mathrm{d}\xi=E_{0}. $$
	Since measure$(\mathcal{N})=0$, it follows from the above that $e^{2\lambda t}\int_{\mathbb{R}}(u^2(t,x)+u_x^2(t,x))\,\mathrm{d}x$ is conserved for a.e. $t\geq 0$.\par 
	Finally, owing to the fact that $L^2(\R)$ is reflexive and making use of Rademacher's theorem \cite{Aronszajn1976}, we know that the left-hand side of \eqref{eq;solution_def} is well-defined. Meanwhile, due to $k(t,\cdot)\in H^1(\R)$ and $\w_x(t,\cdot)\in L^2(\R)$, the right-hand side of \eqref{eq;solution_def} is also well-defined. We now prove that \eqref{eq;solution_def} holds for almost all $t\geq0$. To this end, we observe that for every $t>0$ and $\xi\in\R$,
	\begin{equation*}
	\frac{\mathrm{d}}{\mathrm{d}t}k\left(t,y(t,\xi)\right)=-\w_x(t,\xi),
	\end{equation*}
	where $\w_x$ is defined by \eqref{p_x(t,xi)}. For $t\notin\mathcal{N}$, the map $\xi\mapsto x(t,\xi)$ is one-to-one. Then, changing the variables and using \eqref{eq;y_xi=qcos^2v/2} and \eqref{eq;k_x=sinv/1+cosv}, we obtain that
	\begin{align*}
	\w_{x}(t,\xi)&=\frac12 \Big(\int^{+\infty}_{\xi}-\int^{\xi}_{-\infty}\Big) e^{-|\int^{\xi'}_{\xi}\cos^2\frac{v(s)}{2}q(s)\,\mathrm{d}s|}\Big(e^{\lambda t}g(e^{-\lambda t}k)\cos^2\frac{v}{2}+\frac12 e^{-\lambda t}f''(e^{-\lambda t}k)\sin^2\frac{v}{2}\Big)q\,\mathrm{d}\xi'\\
	&=\frac{1}{2}\left(\int_{\xi}^{+\infty}-\int_{-\infty}^{\xi}\right)e^{-|y(t,\xi')-y(t,\xi)|}\Big(e^{\lambda t}g(e^{-\lambda t}k)+\frac12 e^{-\lambda t}f''(e^{-\lambda t}k)k_x^2\Big)y_\xi(t,\xi') \,\mathrm{d}\xi^{\prime}\\
	&=\frac{1}{2}\left(\int_{y(t,\xi)}^{+\infty}-\int_{-\infty}^{y(t,\xi)}\right)e^{-|y(t,\xi)-x|}\Big(e^{\lambda t}g(e^{-\lambda t}k)+\frac12 e^{-\lambda t}f''(e^{-\lambda t}k)k_x^2\Big)(t,x)\,\mathrm{d}x\\
	&=\w_x(t,y(t,\xi)).
	\end{align*}
	Together with Lemma \ref{lemma4.2}, we conclude that equation \eqref{eq;solution_def} is satisfied for almost every $t\geq 0$. 
    
    Finally, the continuous dependence result follows directly from the argument in \cite{Constantin2007}. This completes the proof of this theorem.

\end{proof}

\section{Global strong solutions}\label{sec-strong}
We now turn to the global existence of strong solutions. This section addresses two distinct regimes, namely small initial data and sign-changing initial data, each utilizing a different mechanism to prevent singularity formation.
\subsection{Global strong solutions for small data}
We begin with Theorem \ref{Thm 4.1}. The proof relies on a bootstrap argument, exploiting the strong linear dissipation to bound the $H^s$-energy for small initial data.
\begin{proof}[Proof of Theorem \ref{Thm 4.1}]
Rewriting equation \eqref{eq;u1}, we obtain
\begin{align}\label{rewri}
    u_t+(f'(u)-f'(0))u_x+f'(0)u_x+\lambda u+ p_x*\left(g(u)-g(0)+\frac{f''(u)}{2}u^2_x\right)=0.
\end{align}
    Multiplying \eqref{rewri} by $u$ and taking the $H^s$-inner product, we have
    \begin{align}\label{esti}
        &\frac{1}{2}\frac{\mathrm{d}}{\mathrm{d}t}\|\Lambda^su\|^2_{L^2}+\lambda\|\Lambda^su\|^2_{L^2}\notag\\
        &\quad=-\langle\Lambda^s[(f'(u)-f'(0))u_x],\Lambda^su\rangle-\left\langle\Lambda^s\left[p_x*\left(g(u)-g(0)+\frac{f''(u)}{2}u^2_x\right)\right],\Lambda^su\right\rangle.
    \end{align}
    We now proceed to estimate each term on the right-hand side of \eqref{esti}. By the Cauchy-Schwartz inequality and Lemma \ref{lem-commu}, we get
    \begin{align}\label{force-1}
        &\left|\langle\Lambda^s[(f'(u)-f'(0))u_x],\Lambda^su\rangle\right|\notag\\
        &=\left|\langle[\Lambda^s,f'(u)-f'(0)]u_x+(f'(u)-f'(0))\Lambda^s u_x,\Lambda^su\rangle\right|\notag\\
        &\le \left\|[\Lambda^s,f'(u)-f'(0)]u_x\|_{L^2}\|\Lambda^su\right\|_{L^2}+|\langle f''(u)u_x,(\Lambda^s u)^2\rangle|\notag\\
        &\lesssim  \|u\|_{H^s}\left(\|f''(u)u_x\|_{L^{\infty}}\|\Lambda^{s-1}u_x\|_{L^2}+\|f'(u)-f'(0)\|_{H^s}\|u_x\|_{L^{\infty}}+\|f''(u)u_x\|_{L^{\infty}}\right)\notag\\
        &\lesssim \|u\|^2_{H^s}\|u_x\|_{L^{\infty}}\notag\\
        &\lesssim \|u\|^3_{H^s}.
    \end{align}
    Moreover, by Lemma \ref{lem-Fu-Fv} and Lemma \ref{lem-F0}, we obtain
    \begin{align}\label{force-2}
        &\left|\left\langle\Lambda^s\left[p_x*\left(g(u)-g(0)+\frac{f''(u)}{2}u^2_x\right)\right],\Lambda^su\right>\right|\notag\\
        &\lesssim \|u\|_{H^s}\left\|g(u)-g(0)+\frac{f''(u)}{2}u^2_x\right\|_{H^{s-1}}\notag\\
        &\lesssim \|u\|_{H^s}\left(\left\|g(u)-g(0)\right\|_{H^{s-1}}+\left\|\left(\frac{f''(u)}{2}-\frac{f''(0)}{2}+\frac{f''(0)}{2}\right)u^2_x\right\|_{H^{s-1}}\right)\notag\\
        &\lesssim \|u\|_{H^s}\left(\left\|u\right\|_{H^{s-1}}+\left\|\left(f''(u)-f''(0)\right)u^2_x\right\|_{H^{s-1}}+|f''(0)|\|u_x\|_{L^{\infty}}\|u\|_{H^s}\right)\notag\\
        &\lesssim \|u\|^2_{H^s}\left(1+\|u\|_{H^s}+\|u\|^2_{H^s}\right).
    \end{align}
    Combining \eqref{esti}, \eqref{force-1} with \eqref{force-2}, we deduce
    \begin{align}\label{esti-2}
        \frac{\mathrm{d}}{\mathrm{d}t}\|u\|^2_{H^s}+2\lambda\|u\|^2_{H^s}\le C\left(1+\|u\|_{H^s}+\|u\|^2_{H^s}\right)\|u\|^2_{H^s}.
    \end{align}
    
    Let $T^*$ be the maximal existence time of the solution $u$. To verify that $T^* = \infty$, we argue by contradiction. If $T^*$ is finite, define $\bar{T}$ by
    \begin{align}\label{barT}
        \bar{T} = \sup \left\{ T ~\middle|~ \sup_{t\in [0,T]} \| u(t)\|_{H^{s}}^{2} + \lambda\int_{0}^{T} \| u(t)\|_{H^{s}}^{2}\,\mathrm{d}t \le 2\varepsilon \right\}.
    \end{align}
    Obviously, $\bar{T}\le T^*$. Assuming $\bar{T}< T^*$, in view of \eqref{esti-2} and \eqref{barT}, for $t<\bar{T}$, it follows that
    \begin{align*}
        \frac{\mathrm{d}}{\mathrm{d}t}\|u\|_{H^{s}}^{2} + 2\lambda\|u\|_{H^{s}}^{2}\leq C\left(1+(2\varepsilon)^{\frac{1}{2}}+(2\varepsilon)\right)\| u\|_{H^{s}}^{2}.
    \end{align*}
    Let $C\left(1+(2\varepsilon)^{\frac{1}{2}}+(2\varepsilon)\right)<\lambda$, provided that $\varepsilon$ is sufficiently small. Consequently,
    \begin{align}
        \frac{\mathrm{d}}{\mathrm{d}t}\|u\|_{H^{s}}^{2} + \lambda\|u\|_{H^{s}}^{2}\leq 0.
    \end{align}
    Upon integrating over $[0, \bar{T}]$ with respect to $t$ and applying the initial condition $\|\bar{u}\|_{H^s}^2 \le \frac{\varepsilon}{2}$ when $\varepsilon$ is sufficiently small, we deduce that
    \begin{align*}
        \sup_{t\in [0,\bar{T}]}\| u(t)\|_{H^{s}}^{2} + \lambda\int_{0}^{\bar{T}}\| u(t)\|_{H^{s}}^{2}\,\mathrm{d}t\leq 2\| \bar{u}\|_{H^{s}}^{2}\leq \varepsilon < 2\varepsilon,
    \end{align*}
    which contradicts \eqref{barT}. 

    Therefore, we have $\bar{T}=T^*$ and
        \begin{align}\label{uT*}
        \sup_{t\in [0,T^*)}\| u(t)\|_{H^{s}}^{2} + \lambda\int_{0}^{T^*}\| u(t)\|_{H^{s}}^{2}\,\mathrm{d}t\leq 2\varepsilon.
    \end{align}
    Now, denote $T_{\delta}:=T^*-\delta\in [0,T^*)$, where $\delta>0$ is sufficiently small such that $\delta<\frac{C}{\sqrt{2\varepsilon}}$. In light of \eqref{uT*}, it gives
    \begin{align*}
        \|u(T_{\delta})\|^2_{H^s}\le 2\varepsilon.
    \end{align*}
    Taking $u(T_\delta)$ as the initial data and invoking the local well-posedness theory established in \cite{Novruzov2019,Tian2014}, we deduce that there exists a unique solution $u$ with lifespan $T_{\text{life}}$ satisfying
    \begin{align*}
    T_{\text{life}}\ge\frac{C}{\|u(T_{\delta})\|_{H^s}}\ge \frac{C}{\sqrt{2\varepsilon}},
    \end{align*}
    which implies that
    \begin{align*}
        T_{\delta}+T_{\text{life}}\geq T_{\delta}+\frac{C}{\|u(T_{\delta})\|^2_{H^s}}>T_{\delta}+\delta=T^*.
    \end{align*}
    This contradicts the definition of $T^*$. Therefore, we conclude that $T^*=\infty$, which yields the desired result.
\end{proof}

\subsection{Global strong solutions for sign-changing data}
In this subsection, we prove Theorem \ref{Thm 4.6}. For sign-changing data, combining the sign-preservation of momentum $m$ along characteristics with the weighted energy conservation yields a uniform lower bound on $u_x$ to prevent blow-up.

We rewrite equation \eqref{eq;u1} in the following equivalent form:
\begin{equation}\label{eq;m}
	\left\{\begin{array}{l}
	m_{t}+f'(u)m_x+\lambda m-\frac 12 f'''(u)u_x^3+2f''(u)u_xm-2f''(u)uu_x+g'(u)u_x=0,\\
        m=u-u_{xx},\\
	u|_{t=0}=\bar{u}.
	\end{array}\right.
	\end{equation}
Recall the differential equation considered in \eqref{eq;flow_def}:
\begin{equation}\label{eq;y_flow}
		\frac{\partial}{\partial t}y(t,x)=f'(u(t,y(t,x))),\quad y(0,x)=\bar{y}(x).
	\end{equation}
    Using \eqref{eq;bar y xi}, we obtain the following lemma.
\begin{lemm}\label{le4.4}
	Assume $f, g \in C^\infty (\R)$. Let $\bar{u}\in H^s(\mathbb{R})$ with $s>\frac 32$. Then \eqref{eq;y_flow} has a unique solution $ y \in C^1([0,T)\times \mathbb{R};\mathbb{R})$. Moreover, the map $y(t,\cdot)$ is an increasing diffeomorphism of $\mathbb{R}$ with  
		$$y_x(t,x)=\frac{1}{1+\bar{u}_x^2(\bar{y}(x))}\exp\Big(\int_{0}^{t} (f'(u))_x(s,y(s,x))\,\mathrm{d}s\Big)>0,~\forall(t,x)\in [0,T)\times \mathbb{R}.$$ 
	\end{lemm}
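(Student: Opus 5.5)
This is a linear ODE argument along characteristics. Fix $T$ as the lifespan of the strong solution $u\in C([0,T);H^s(\R))$ from Lemma \ref{le4.2}. Since $s>\frac32$, the embedding $H^s(\R)\hookrightarrow C^1(\R)$ gives $u,u_x$ bounded and continuous on $[0,T']\times\R$ for every $T'<T$. Then $f'(u)$ is bounded and uniformly Lipschitz in $x$ on such strips, because $|(f'(u))_x|=|f''(u)u_x|\le C(T')$. The Cauchy–Lipschitz theorem (global version, since the right-hand side is bounded) therefore yields a unique solution $y(t,x)$ of \eqref{eq;y_flow} on $[0,T)$ for each starting point $\bar y(x)$. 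The map $\bar y$ is $C^1$ with $\bar y_x=\frac{1}{1+\bar u_x^2(\bar y)}$ by \eqref{eq;bar y xi}, because $\bar u_x$ is continuous. Standard differentiable dependence on initial data, with $f'(u)\in C^1$ in $(t,x)$ (using $u\in C^1([0,T);H^{s-1})\subset C^1$ in $t$ into $C$), then gives $y\in C^1([0,T)\times\R)$.

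Next I differentiate \eqref{eq;y_flow} in $x$, which gives the linear ODE
\[
\frac{\partial}{\partial t}y_x(t,x)=f''(u(t,y))\,u_x(t,y)\,y_x(t,x)=(f'(u))_x(t,y(t,x))\,y_x(t,x),
\]
with $y_x(0,x)=\bar y_x(x)=\frac{1}{1+\bar u_x^2(\bar y(x))}$. Solving it explicitly yields the stated exponential formula. Positivity follows at once, since the initial value is positive and the exponential of a finite integral is positive; that integral is finite because $(f'(u))_x$ is bounded on $[0,t]\times\R$.

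It remains to show that $y(t,\cdot)$ is a bijection of $\R$; strict monotonicity and $C^1$ regularity with $y_x>0$ then give the diffeomorphism. For surjectivity I use $|y(t,x)-\bar y(x)|\le t\sup|f'(u)|\le Ct$, which is finite because $\|u\|_{L^\infty}\le\|u\|_{H^1}\le\|\bar u\|_{H^1}$. Combined with $\bar y(x)\to\pm\infty$ as $x\to\pm\infty$, this gives $y(t,x)\to\pm\infty$, and continuity then yields surjectivity.

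The only delicate point is justifying the $x$-differentiation, namely the $C^1$ regularity of the vector field in $(t,x)$. I will handle it by working first on compact strips $[0,T']$, and if needed by approximating with smooth data and passing to the limit in the explicit integral formula.
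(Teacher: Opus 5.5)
Your proposal is correct and follows the argument the paper has in mind. The paper gives no details beyond citing \eqref{eq;bar y xi} for the initial value $y_x(0,x)=\bar{y}_x(x)=\frac{1}{1+\bar{u}_x^2(\bar{y}(x))}$, and otherwise relies on the standard Cauchy--Lipschitz, variational-equation and surjectivity argument along characteristics that you write out. The ``delicate point'' you flag is already settled by your own observation that $u$, $u_x$, $u_t$ are jointly continuous and bounded on $[0,T']\times\R$, so no smoothing or approximation step is needed.
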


\begin{lemm}\label{le4.5}
    Assume $f, g \in C^\infty (\R)$ satisfying 
    \begin{equation}\label{eq;con}
        f'''\equiv 0,\quad g'(u)=2f''(u)u.
    \end{equation}
    Let $\bar{u}\in H^s(\R)$ with $s>\frac 32$, and let $T > 0$ be the maximal existence time of the corresponding solution $u$ to \eqref{eq;m}. Then we have
    \begin{equation}
        m(t,y(t,x))y_x^2(t,x)=\bar{m}(\bar{y}(x))\bar{y}_x^2(x)e^{-\lambda t},\quad\forall(t,x)\in [0,T)\times \mathbb{R},
    \end{equation}
    where $\bar{m}=\bar{u}-\bar{u}_{xx}.$
\end{lemm}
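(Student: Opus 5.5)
The identity will follow from showing that $\frac{\mathrm{d}}{\mathrm{d}t}\big[m(t,y(t,x))y_x^2(t,x)\big] = -\lambda\, m(t,y(t,x))y_x^2(t,x)$ along each characteristic, and then integrating this linear ODE in $t$. The initial value is $m(0,\bar y(x))\bar y_x^2(x)=\bar m(\bar y(x))\bar y_x^2(x)$, because $y(0,x)=\bar y(x)$. Integrating therefore gives the factor $e^{-\lambda t}$. By Lemma \ref{le4.4}, $y\in C^1$ and $y_x>0$, so the computation below is legitimate.

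The first step is to simplify the $m$-equation in \eqref{eq;m} using the hypothesis \eqref{eq;con}. The term $-\frac12 f'''(u)u_x^3$ drops out since $f'''\equiv 0$. The terms $-2f''(u)uu_x+g'(u)u_x$ cancel since $g'(u)=2f''(u)u$. What remains is
\begin{equation*}
m_t+f'(u)m_x+\lambda m+2f''(u)u_xm=0 .
\end{equation*}
The second step is to differentiate \eqref{eq;y_flow} in $x$, which gives
\begin{equation*}
y_{tx}=f''(u(t,y))\,u_x(t,y)\,y_x .
\end{equation*}
This is consistent with the exponential formula in Lemma \ref{le4.4}.

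The third step is the derivative computation along the flow:
\begin{align*}
\frac{\mathrm{d}}{\mathrm{d}t}\big[m(t,y)y_x^2\big]&=\big(m_t+f'(u)m_x\big)(t,y)\,y_x^2+2m(t,y)\,y_x\,y_{xt}\\
&=\big(-\lambda m-2f''(u)u_xm\big)(t,y)\,y_x^2+2f''(u)u_x\,m(t,y)\,y_x^2\\
&=-\lambda\, m(t,y)y_x^2 .
\end{align*}
The $2f''(u)u_xm$ contributions cancel exactly, and this cancellation is the reason for the exponent $2$ on $y_x$.

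The main obstacle is regularity. For $s>\frac32$ we only know $m\in H^{s-2}$, so $m_t$ and $m_x$ need not be classical functions. I would first establish the identity for smooth data, say $\bar u\in H^{s'}$ with $s'\ge 3$, where every step above is classical. I would then extend it to general $\bar u\in H^s$ by approximation, using the continuous dependence in the local well-posedness theory of Lemma \ref{le4.2}. Both sides are then read in the distributional sense, with $y_x$ converging uniformly on compact sets. This density argument is standard and parallels the analogous lemma in \cite{Yin2009}.
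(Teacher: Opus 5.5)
Your proposal is correct and follows the paper's proof. Both differentiate $m(t,y)y_x^2$ along the flow, use $y_{tx}=f''(u)u_xy_x$ together with $f'''\equiv 0$ and $g'(u)=2f''(u)u$ to get $\frac{\mathrm{d}}{\mathrm{d}t}\big(m(t,y)y_x^2\big)=-\lambda\, m(t,y)y_x^2$, and then integrate from $m(0,\bar y(x))\bar y_x^2(x)=\bar m(\bar y(x))\bar y_x^2(x)$. Your extra density step, proving the identity for smooth data and passing to the limit by continuous dependence, handles a regularity point the paper leaves implicit, since for $\frac32<s<2$ the function $m$ is only a distribution.
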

\begin{proof}
    Using \eqref{eq;m} and condition \eqref{eq;con}, we have
    \begin{align*}
        \frac{\mathrm{d}}{\mathrm{d}t}(m(t,y(t,x))y_x^2)&=m_t(t,y)y_x^2+m_x(t,y)f'(u(t,y))y_x^2+2m(t,y)y_xy_{tx} \\
        &=-\lambda m(t,y)y_x^2+\Big(\frac{1}{2}f'''(u)u_x^2+2f''(u)uu_x-g'(u)u_x \Big)y_x^2\\
        &=-\lambda m(t,y)y_x^2.
    \end{align*}
    Therefore, by solving the above ODE, we complete the proof.
\end{proof}
    Combining Lemmas \eqref{le4.2}, \eqref{le4.3}, \eqref{le4.4}, \eqref{le4.5} and following the method in \cite{Yin2009}, we give a brief proof of Theorem \ref{Thm 4.6}.

\begin{proof}[Proof of Theorem \ref{Thm 4.6}]
    Let $T > 0$ be the maximal existence time of the solution $u(t, x)$ to equation \eqref{eq;u1}, as given by Lemma \ref{le4.2}.
    Applying Lemmas \ref{le4.4} and \ref{le4.5}, for $t\in[0,T)$, we have
    \begin{equation*}
        m(t,x)\leq 0,\quad \text{if}~x\leq y(t,x_0)\quad \text{and}\quad  m(t,x)\geq 0,\quad \text{if}~x\geq y(t,x_0).
    \end{equation*}
    By the Sobolev imbedding theorem and the fact that $e^{\lambda t}\|u(t)\|_{H^1}=\|\bar{u}\|_{H^1}$, we deduce that
    \begin{equation*}
        u_x(t,x)\geq-\|u(t,\cdot)\|_{L^\infty}\geq-\frac{1}{\sqrt{2}}\|\bar{u}\|_{H^1},\quad \forall(t,x)\in[0,T)\times\R.
    \end{equation*}
    Combining with Lemma \ref{le4.3}, we deduce that $T=\infty$. The proof of the decay of global solutions follows similarly to that in \cite{Yin2009}, and is therefore omitted.
    
\end{proof}

	\smallskip
	\noindent\textbf{Acknowledgments.}~~This work was
	supported by the National Natural Science Foundation of China (No.12571261).

	\noindent\textbf{Data Availability.}
	No data were used for the research described in the article.

	\phantomsection
	\addcontentsline{toc}{section}{\refname}
	\bibliographystyle{abbrv} 
	\bibliography{refref1}

\end{document}